\newcommand{\lp}{\left(}
\newcommand{\rp}{\right)}
\newcommand{\lb}{\left[}
\newcommand{\rb}{\right]}
\newcommand{\Per}{\mathcal P}
\newtheorem{theorem}{Theorem}[section]
\newtheorem{lemma}[theorem]{Lemma}
\newtheorem{definition}[theorem]{Definition}
\newtheorem{proposition}[theorem]{Proposition}
\theoremstyle{remark}
\newtheorem{exmp}{Example}[section]
\newtheorem*{remark}{Remark}
\newcommand{\Free}{\mathbb F}
\newcommand{\FreeRelPer}{\lp \Free,\Per\rp}
\newcommand{\mathdash}{\text{--}}
\title{On the Bowditch boundary of a free group with random cyclic peripheral groups}
\author{Aaron W. Messerla}
\date{}
\begin{document}

\maketitle
\begin{abstract}
The topology of the Bowditch boundary of a relatively hyperbolic group pair gives information about relative splittings of the group. It is therefore interesting to ask if there is generic behavior of this boundary. The purpose of this article is to show that in the case of a non-abelian free group with cyclic peripheral structure, there is not a generic case. This goal is accomplished by showing the minimal size of cut sets in the boundary increases as the lengths of the random words generating the peripheral structure go to infinity.
\end{abstract}

\section{Introduction}

For the entirety of this paper, $\Free$ will denote a finitely generated free group of rank $r\geq 2$, and $\Per$ will denote a peripheral structure on $\Free$ consisting of cyclic subgroups. The group $\Free$ acts on the Bowditch boundary, $\partial\FreeRelPer$, and each peripheral subgroup $P\in \Per$ stabilizes a unique point. Each parabolic point $p$ has a neighborhood $p\in U\subset \partial\FreeRelPer$ so that $U\setminus \{p\}$ has two components by work of Haulmark \cite{HaulmarkLocalCutPoints}. Cashen and Manning have shown that given a finitely generated non-cyclic free group, and a random multi-word determining a peripheral structure, the Bowditch boundary will be connected, contain no cut points, and be non-planar \cite{CashenManning}. Additionally, it was shown in \cite{CashenManning} that no two non-parabolic points in the boundary form a cut set. The results of \cite{CashenManning} are enough to show that $\Free$ does not split freely or cyclically relative to the random peripheral structure $\Per$. With this much known structure, it is tempting to ask if there is some generic case for $\partial\FreeRelPer$, where the words generating $\Per$ are chosen randomly. 

It is shown in \cite{CashenMacuraLinePatterns} that for a fixed cyclic peripheral structure $\Per$, $\partial\FreeRelPer$ has a finite cut set. However, the following theorem shows that there is no generic behavior of the Bowditch boundary in the case we are considering.

\begin{restatable}{theorem}{NoSizeK}\label{NoSizeK}
For any given positive integer $k$, and a random cyclic peripheral structure $\Per$, $\partial\FreeRelPer$ does not contain a cut set of size less than $k$.
\end{restatable}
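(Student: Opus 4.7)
The approach is to extend the arguments of Cashen and Manning, which handled the case of cut sets of size at most $2$ consisting of non-parabolic points, to cover arbitrary cut sets of size less than $k$, including those that contain parabolic points. I would work in the standard combinatorial model in which each peripheral coset $gP$ (for $P \in \Per$) corresponds to a bi-infinite axis $L_{gP}$ in the Cayley tree $T$ of $\Free$, and $\partial\FreeRelPer$ is realized as a quotient of a model space built from $T$ together with the line pattern $\mathcal{L} = \{L_{gP}\}$. Under this model, a size-$m$ cut set of $\partial\FreeRelPer$ corresponds to removing $m$ objects --- each either an end of $T$ (a non-parabolic point) or a line in $\mathcal{L}$ (a parabolic point) --- such that the resulting quotient is disconnected.

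Next, I would translate the existence of such a cut set into a combinatorial separation statement about $\mathcal{L}$: after removing the $m$ selected items, the remaining ends and lines split into two nonempty collections with no ``crossing'' bridging them, in the sense developed by Cashen and Manning. The key combinatorial lemma I would aim for says that any such separating configuration must contain at least $k$ objects, provided that the peripheral words produce sufficiently many crossings. This would be proved inductively on $k$, using the Cashen--Manning non-existence of non-parabolic cut pairs, together with Haulmark's description of neighborhoods of parabolic points, as base cases. The inductive step would analyze how adding one more object to a purported separating configuration is blocked by the presence of additional crossings produced by long random peripheral words.

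The final step is probabilistic. For random peripheral words of length $n$, the density and spread of line crossings in $T$ grows with $n$, and my plan would be to show that the probability of realizing any separating configuration of size less than $k$ tends to $0$ as $n \to \infty$. This would proceed by a union bound over the finitely many ``combinatorial types'' of potential small cut sets, combined with an exponential decay estimate (in $n$) for each type, using the fact that a random long word creates many independent crossings with other random words with high probability. I expect the main obstacle to be the treatment of mixed cut sets that contain parabolic points: removing a parabolic point collapses an entire axis from $\mathcal{L}$ and interacts with the local-cut-point structure identified by Haulmark, so ensuring that admitting a few parabolic points into the cut set cannot substitute for a larger non-parabolic cut set requires delicate bookkeeping on how the collapsed axes are surrounded by other crossing lines.
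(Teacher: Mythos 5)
Your general setup (the decomposition space of a line pattern, reducing disconnection of the boundary to a combinatorial separation statement about lines in the Cayley tree, then a genericity argument) matches the paper, but the two central steps of your outline have genuine gaps. First, your ``key combinatorial lemma'' is proposed to be proved by induction on $k$, but there is no natural inductive structure here: a cut set of size $m$ neither contains nor extends to one of size $m\pm 1$ in any controlled way, so it is unclear what the inductive step would act on. The paper instead proves Lemma~\ref{Kconnectedimpliesnosmall} directly, using the Cashen--Macura pruned core trichotomy (Lemma~\ref{cases}) together with Lemma~\ref{CoreGood}: a \emph{minimal} cut set $S$ localizes to a single edge, a single vertex, or a leaf $v$ of the pruned core, and in the leaf case a minimality argument shows $\mathcal{WH}\lp\{v\}\rp\setminus\tilde S$ must be disconnected or have a cut vertex, which is impossible if every pair of vertices of $\mathcal{WH}\lp\{v\}\rp$ is joined by at least $k$ edges and $\#S<k$. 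That quantitative hypothesis ($k$ copies of every reduced length-$2$ subword) is the actual replacement for your ``sufficiently many crossings,'' and no induction is needed. Second, the issue you flag as the main obstacle --- cut sets containing parabolic points --- is resolved in the paper not by bookkeeping on collapsed axes but by a structural reduction: once cut points and cut pairs are excluded (Lemma~\ref{noweirdcutpairs}, which handles the parabolic cases by analyzing tripod- and X/H-shaped convex hulls using $4$-fullness), \cite{CashenMacuraLinePatterns}*{Lemma 4.19} guarantees every finite minimal cut set consists entirely of good (two-preimage) points, so parabolic points never enter the size-$\geq 3$ analysis at all. Haulmark's local cut point result, which you invoke as a base case, plays no role in the proof.

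The probabilistic step as you describe it is also not viable in its stated form: there are infinitely many candidate separating configurations of each size (they are parametrized by subtrees of $\mathcal T$, and the pruned core can be arbitrarily large), so a union bound over ``finitely many combinatorial types'' presupposes exactly the localization that the deterministic Whitehead-graph machinery provides. The paper avoids this entirely by isolating a deterministic sufficient condition on a single peripheral word --- cyclically reduced, not a proper power, $4$-full, and containing $k$ copies of every reduced length-$2$ word --- and showing this condition is exponentially generic (Lemmas~\ref{DenseEnough}, \ref{hasKedges}, \ref{fourfull}, resting on the pseudorandomness results of \cite{KSSGenericProperties} and the genericity lemmas of \cite{CashenManning}). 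You would need to either carry out that localization yourself or restructure the probabilistic argument along these lines.
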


Here, random is defined to be analogous to either the \emph{few relators} model of \cite{ArzhantsevaOlshanskiFewRelators}  or the \emph{density model} of Gromov \cite{GromovAsymptInv}, where instead of adding the words as relators, words are used to define a cyclic peripheral structure on the free group.

There are two main components to the proof of Theorem \ref{NoSizeK}. First, while Cashen and Manning rule out the possibility of splitting over a word relative to the peripheral structure, there is still the possibility that two parabolic points, or a parabolic point and a conical limit point, form a cut pair in the boundary. For an example of such behavior, one can consider $\Free=\langle a,b\rangle$  and $\Per=\{\langle\lb a,b\rb\rangle\}$. In this case, it is well known that $\partial\FreeRelPer=\mathbb S^1$, which has the property that any pair of points is a cut pair. These other cut pairs will not happen generically, shown by Lemma \ref{noweirdcutpairs}, which is proved with techniques similar to those in \cite{CashenManning}.  Secondly, it is shown that when every reduced word of length 2 occurs as a subword $k$ times in the peripheral structure, the boundary will not have cut sets of size less than $k$, which is shown in Lemma \ref{Kconnectedimpliesnosmall}. This lemma relies on several results of \cite{CashenMacuraLinePatterns}.

The paper is organized as follows. In Section \ref{WhiteheadGraphs}, generalized Whitehead Graphs are reviewed, as well as establishing some lemmas used in the proof of Theorem \ref{NoSizeK}. In Section \ref{Randomness}, the definition of exponentially generic sets will be reviewed, and the properties needed to apply Lemma \ref{Kconnectedimpliesnosmall} will be shown to be exponentially generic. The proof of the main theorem will follow shortly after.

\subsection*{Acknowledgments} 
\vspace{-.75pt} The author would like to thank his advisor, Daniel Groves, for suggesting this problem and many helpful discussions, as well as Chris Hruska for an insightful conversation about cut pairs in relative boundaries.

\section{Background and Generalized Whitehead Graphs}\label{WhiteheadGraphs}
\subsection{The Bowditch Boundary and the Decomposition Space}
In this paper, we are concerned with the Bowditch boundary of a free group, $\Free$ with cyclic peripheral structure $\Per$. Most existing literature on this topic describes this structure as a \emph{decomposition space} of a line pattern. It will be useful to switch between these two ideas throughout the paper, so we begin with a discussion of both topics.

The decomposition space was introduced in \cite{Otal} and studied further in \cites{CashenMacuraLinePatterns,CashenManning, CashenSplittingLinePatterns}. We take the following definitions from \cite{CashenMacuraLinePatterns}.
\begin{definition}[$g$-Lines]
For a cyclically reduced word $g$ which is not a proper power, a $g$\emph{-line} is the geodesic in a Cayley graph of $\Free$ containing the vertices $\{hg^{k}\}_{k\in \mathbb Z}$ for some $h\in\Free$.  
\end{definition}

We note that any left coset of $\langle g\rangle$ determines a $g$-line.

\begin{definition}[Line Patterns]
    Given a collection $\{g_1,...,g_n\}$ where each $g_i$ is cyclically reduced and not a proper power, the \emph{line pattern $\mathcal L$ generated by} $\{g_1,...,g_n\}$, is the collection of all $g_i$-lines in a Cayley graph of $\Free$.
\end{definition}
   
Let $\mathcal T$ be a Cayley graph for $\Free$, and let $\partial \mathcal T$ denote its boundary. Fix a line pattern $\mathcal L$. 

\begin{definition}
    The \emph{decomposition space} $D_\mathcal L$ associated to $\mathcal L$ is the quotient of $\partial\mathcal T$ where two points $x,y\in\partial\mathcal T$ are identified if there is a line $l\in\mathcal L$ with one endpoint $x$ and the other endpoint $y$.
\end{definition}

The following definition is given by Bowditch in \cite{BowditchRH}. 

\begin{definition}[Peripheral Structure]
A \emph{peripheral structure} on a group $G$ consists of a set $\Per$ of infinite subgroups of $G$ such that each $P\in \Per$ is equal to its normalizer in $G$ and each $G$-conjugate of a $P\in\Per$ lies in $\Per$.    
\end{definition}

The following proposition is well known, and will be used without reference when using a finite collection of words in $\Free$ to define a peripheral structure on $\Free$. For a word $w\in \Free$ with a fixed generating set $\mathcal B$, we let $\|w\|_\mathcal{B}$ denote the length of $w$. Let $w'$ denote the cyclic reduction of $w$, and $\sqrt{w}$ denote the shortest word so that $w=\sqrt{w}^i$ for some $i$. 

\begin{proposition}
    Let $\{g_1,...,g_n\}$ be a finite set of words in $\Free$. Then, the set of all conjugates of subgroups of the form $\langle\sqrt{g_i'}\rangle$ is a peripheral structure on $\Free$. Furthermore, $\Free$ is hyperbolic relative to this peripheral structure.
\end{proposition}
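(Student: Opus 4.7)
The plan is to verify the two assertions separately: (i) the set $\Per$ of all $\Free$-conjugates of the subgroups $\langle \sqrt{g_i'}\rangle$ is a peripheral structure on $\Free$, and (ii) $\Free$ is hyperbolic relative to $\Per$. For (i), each $\sqrt{g_i'}$ is a nontrivial element of $\Free$ that is not a proper power, so $\langle \sqrt{g_i'}\rangle$ is infinite cyclic, and both properties are preserved by conjugation. Closure of $\Per$ under conjugation holds by construction, so the only nontrivial condition to verify is that each $P\in\Per$ equals its own normalizer. After conjugating, it is enough to prove this for $P=\langle\sqrt{g_i'}\rangle$, which reduces to the standard fact that a maximal infinite cyclic subgroup of a free group is self-normalizing: if $h$ normalizes $\langle x\rangle$, then $h$ preserves the axis of $x$ in $\mathcal T$, and since $\Free$ acts freely without inversion on $\mathcal T$, the stabilizer of that axis is exactly $\langle x\rangle$ (using that $x$ is primitive).

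For (ii), I would appeal to Bowditch's criterion from \cite{BowditchRH}: a word-hyperbolic group $G$ is hyperbolic relative to a finite family of subgroups provided the subgroups are quasiconvex and the family is almost malnormal, in the sense that $g H_i g^{-1}\cap H_j$ is finite unless $i=j$ and $g\in H_i$. The group $\Free$ is hyperbolic and every cyclic subgroup of $\Free$ is convex in $\mathcal T$, hence quasiconvex, so it remains to verify almost malnormality. If $P_1,P_2\in \Per$ satisfy $P_1\cap P_2\ne\{1\}$, writing $P_j=f_j\langle \sqrt{g_{i_j}'}\rangle f_j^{-1}$ there are nonzero integers $n,m$ with $f_1\sqrt{g_{i_1}'}^{\,n} f_1^{-1}=f_2\sqrt{g_{i_2}'}^{\,m} f_2^{-1}$. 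Comparing translation lengths in $\mathcal T$ forces $|n|=|m|$, and uniqueness of primitive roots in $\Free$ forces the maximal cyclic subgroups $\langle \sqrt{g_{i_j}'}\rangle$ to be $\Free$-conjugate; combined with the self-normalization established in (i), this forces $P_1=P_2$, as required.

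The only step that requires any real work is the almost malnormality verification; the other ingredients are standard facts about free groups and their actions on their Cayley trees. The main conceptual point to keep track of is that the sets $\{f\langle \sqrt{g_i'}\rangle f^{-1}\}$ for different indices $i$ may overlap when the corresponding maximal cyclic subgroups are $\Free$-conjugate, but in that case they simply collapse into a single conjugacy class in $\Per$ and the almost malnormality argument above still applies verbatim.
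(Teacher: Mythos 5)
Your proposal is correct and follows essentially the same route as the paper: both verify the peripheral structure axioms by reducing to the fact that $\langle\sqrt{g_i'}\rangle$ is maximal cyclic (hence self-normalizing), and both obtain relative hyperbolicity from Bowditch's criterion using quasiconvexity of cyclic subgroups together with the fact that distinct maximal cyclic subgroups of a free group intersect trivially (your almost-malnormality check). Your write-up simply spells out in more detail the standard facts that the paper cites in one line.
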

\begin{proof}
 Let $\Per$ be the set of all conjugates of subgroups of the form $\langle\sqrt{g_i'}\rangle$. Every cyclic subgroup of a free group is infinite, and every $G$ conjugate of $\langle\sqrt{g_i'}\rangle$ is in $\Per$ by construction. Each $\sqrt{g_i'}$ is cyclically reduced and not a proper power, so $\langle\sqrt{g_i'}\rangle$ is maximal cyclic, and conjugates of maximal cyclic subgroups are maximal cyclic. Since maximal cyclic subgroups of free groups are equal to their normalizer, $\Per$ is a peripheral structure on $\Free$. 

 As maximal cyclic subgroups are quasi-convex in $\Free$, and distinct maximal cyclic subgroups have trivial intersection, $\FreeRelPer$ is relatively hyperbolic by \cite{BowditchRH}*{Theorem 7.11}
\end{proof}

 We say that the peripheral structure formed by taking all conjugates of the $\langle \sqrt{g_i'}\rangle$ is the peripheral structure \emph{induced} by $\{g_1,...,g_n\}$, and that the words $\sqrt{g_i'}$ \emph{generate} $\Per$.

 When referencing a peripheral structure $\Per$, it may be useful to instead consider the line pattern $\mathcal L$ \emph{generated} by $\Per$. As above, consider $\Per$ as the conjugacy classes of finitely many cyclically reduced words $\{g_1,...,g_n\}$ where each $g_i$ is not a proper power.
 
 \begin{definition}
     The line pattern \emph{generated by} $\Per$ is the line pattern generated by the $\{g_1,...,g_n\}$ which generate $\Per$.
 \end{definition}
 
 By an \emph{endpoint} of a $g$-line, we mean a point in the boundary of the Cayley graph corresponding to either $hg^\infty$ or $hg^{-\infty}$

\begin{remark}
For a general relatively hyperbolic group pair, the Bowditch boundary was introduced in \cite{BowditchRH}. It is immediate from \cite{Tran}*{Theorem 1.1} that the Bowditch boundary of a free group with cyclic peripheral structure is equivalent to the decomposition space associated to the line pattern generated by the peripheral structure.  In this paper, $\partial\FreeRelPer$ will be discussed as a boundary, even when referencing the literature that discusses the decomposition space.
\end{remark}

\subsection{Generalized Whitehead Graphs}

As shown in \cite{CashenMacuraLinePatterns}, an efficient way of analyzing connectedness properties of the relative boundary is through the use of \emph{Whitehead graphs}. Let $\mathcal T$ be a Cayley graph of $\mathbb F$ with a fixed basis $\mathcal B$, and let $\overline{\mathcal T}$ denote its compactification by taking the union of $\mathcal T$ with its Gromov boundary  $\partial \mathcal T$, and the usual topology. Given a cyclic peripheral structure $\Per$, let $\mathcal L$ denote the line pattern generated by $\Per$. Let $\mathcal X$ be some closed connected subset of $\overline{\mathcal T}$.

\begin{definition}[Generalized Whitehead Graphs]
    The generalized Whitehead graph $\mathcal{WH}_{\mathcal{B}}\lp \mathcal X\rp\{\mathcal L\}$ is a graph  with vertices the connected components of $\overline{\mathcal T}\setminus \mathcal X$, and an edge between vertices $X$ and $Y$ if there is a line in $\mathcal L$ with endpoints in both $X$ and $Y$.
\end{definition}

It will be common to suppress the $\mathcal B$ and/or $\mathcal L$ when the basis and/or line pattern should be clear from context.

\begin{exmp}
    Consider the Whitehead graph $\mathcal{WH}\lp \mathcal \{p\}\rp\{\mathcal L\}$ for a vertex $p\in \mathcal T$. This graph consists of $2r$ vertices where $r$ is the rank of $\Free$ and some number of edges. The edges of $\mathcal{WH}\lp \mathcal \{p\}\rp\{\mathcal L\}$ correspond to every length 2 subword occurring in $\{g_1,...,g_n\}$, where each $g_i$ is considered as a cyclic word. The number of edges in $\mathcal{WH}\lp \mathcal \{p\}\rp\{\mathcal L\}$ is the sum of the lengths of the $g_i$. Since line patterns are invariant under translation, the choice of vertex is irrelevant.
\end{exmp}

\begin{figure}[h!]
     \centering\includegraphics[scale =1]{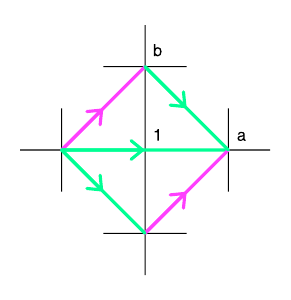}    
    \caption{
    Suppose $r=2$ and $\Free=\langle a,b\rangle.$ Let $\Per$ be generated by $\{ab, a^2b^{-1}\}$ and let $\mathcal L$ be the induced line pattern. In the illustration of $\mathcal{WH}\lp \{1\}\rp\{\mathcal L\}$ above, the purple edges indicate $ab$-lines and the green lines indicate $a^2b^{-1}$-lines. Note there are three $a^2b^{-1}$-lines going through $1$ corresponding to the left cosets $\langle a^2b^{-1}\rangle,\ a^{-1}\langle a^2b^{-1}\rangle,\text{ and }a^{-2}\langle a^2b^{-1}\rangle$. In $\mathcal T\setminus\{1\}$, these lines connect the components corresponding to $b$ with $a$, $a^{-1}$ with $a$, and $a^{-1}$ with $b^{-1}$ respectively. Note that to see this for the $\langle a^2b^{-1}\rangle$ coset, the line must be extended in both directions. Note additionally that these edges come from considering $a^2b^{-1}$ as a cyclic word and seeing the length 2 subwords $b^{-1}a$, $a^2$, and $ab^{-1}$ respectively. Similarly, the purple lines correspond to the length 2 subwords $ab$, and $ba$.
    }
    \label{WHEX}
\end{figure}

The following notation will be used to describe alterations to a Whitehead graph. For a Whitehead graph $\mathcal{WH}\lp \mathcal X\rp\{\mathcal L\}$, and a collection of lines $\tilde S\in \mathcal L$, we let $\mathcal{WH}\lp \mathcal X\rp\{\mathcal L\}\setminus \tilde S$ denote the graph obtained from $\mathcal{WH}\lp \mathcal X\rp$ by removing the interiors of edges corresponding to the lines in $\tilde S$. Note that if $\tilde S$  consists entirely of lines that do not go through a point $p$, then $\mathcal{WH}\lp \{p\}\rp\setminus\tilde S$ is the same as $\mathcal{WH}\lp \{p\}\rp$.

Given a graph $\Gamma$, and a vertex $v$ of $\Gamma$, we let $\lp \Gamma\rp\mathdash v$ be a graph where the vertex $v$ has been deleted, but the edges incident on $v$ are retained, with new, distinct, terminal vertices added. We call such an edge a \emph{loose edge}. The new terminal vertices will largely be ignored, and are removed after the process of splicing described below. 

\begin{remark}
    This is a departure from the notation in \cite{CashenMacuraLinePatterns}. While Cashen and Macura use the symbol ``$\setminus$" to indicate all of the described actions, in this paper ``$\setminus$" will be used for set difference, and removal of interiors of edges from Whitehead graphs, while ``$\mathdash$" will be used exclusively for the operation of removing a vertex from a graph, while retaining the loose edges.
\end{remark}

The notion of \emph{splicing} was introduced by Manning in \cite{ManningVirtuallyGeometric}. Given two disjoint closed subsets $\mathcal A,\mathcal B\subset \overline{\mathcal T}$ so that the distance from $\mathcal A$ to $\mathcal B$ is 1, we can construct $\mathcal{WH}\lp \mathcal A\cup \mathcal B\rp$ from $\mathcal{WH}\lp \mathcal A\rp$ and $\mathcal{WH}\lp \mathcal B\rp$ as follows. Suppose that $x$ and $y$ are vertices in $\mathcal A$ and $\mathcal B$ respectively so that $x$ and $y$ are joined by an edge $e$ in $\mathcal T$. Then $\mathcal T\setminus \mathcal A$ has one component corresponding to the edge $e$, and so does $\mathcal T\setminus \mathcal B$. In the Whitehead graphs, $e$ determines a vertex $v$ in $\mathcal{WH}\lp \mathcal A\rp$ and $w$ in $\mathcal{WH}\lp \mathcal B\rp$. An edge incident on $v$ in $\mathcal{WH}\lp \mathcal A\rp$ corresponds to a line in the pattern which goes through the vertices $x$ and $y$ in $\mathcal T$. The same is true of edges incident on $w$ in  $\mathcal{WH}\lp \mathcal B\rp$. In particular, in $\lp\mathcal{WH}\lp \mathcal A\rp\rp\mathdash v$ and $\lp \mathcal{WH}\lp \mathcal B\rp\rp\mathdash w$, we can then pair up a loose edge from $\lp \mathcal{WH}\lp \mathcal A\rp\rp\mathdash v$ with a loose edge from $\lp \mathcal{WH}\lp \mathcal B\rp\rp\mathdash w$, when the edges correspond to the same line. The resulting graph is $\mathcal{WH}\lp \mathcal A\cup \mathcal B\rp$.

\begin{figure}[h!]
     \centering\includegraphics[scale =1]{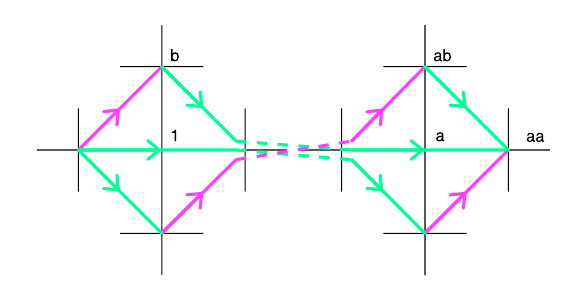}    
    \caption{Let $\mathcal L$ be as in Figure \ref{WHEX}. This is an illustration of $\mathcal{WH}\lp \lb 1,a\rb\rp\{\mathcal L\}$. The loose edges of $\mathcal{WH}\lp \{1\}\rp$ and $\mathcal{WH}\lp \{a\}\rp$ are those incident on the dotted segments, and the dotted segments indicate which lines are paired in the splicing process. }
    \label{SpliceExample}
\end{figure}

We can generalize the notion of splicing to cover the case where a set of lines in the family is ignored. In particular, let $\tilde S=\{l_1,...,l_k\}$ be a finite set of lines in $\mathcal L$. If $\mathcal A$ and $\mathcal B$ are disjoint subsets of $\mathcal T$ so that there are adjacent vertices $x\in\mathcal A$ and $y\in\mathcal B$ connected by an edge $e$, we have that a $l_i$ goes through $e$ if and only if it determines a line in both $\mathcal{WH}\lp \mathcal A\rp$ and $\mathcal{WH}\lp\mathcal B\rp$. Therefore, one could use splicing to recover $\mathcal{WH}\lp \mathcal A\cup\mathcal B\rp$, and then remove the edges corresponding to $\tilde S$ to get the graph $\mathcal{WH}\lp \mathcal A\cup\mathcal B\rp\setminus \tilde S$. Alternatively, in the same way as above, one can recover $\mathcal{WH}\lp \mathcal A\cup \mathcal B\rp\setminus \tilde S$ from splicing together $\mathcal{WH}\lp \mathcal A\rp\setminus \tilde S$ and $\mathcal{WH}\lp \mathcal B\rp\setminus \tilde S$.

The following idea is found in the proof of \cite{CashenMacuraLinePatterns}*{Lemma 4.20}, which is cited as Lemma \ref{CoreGood} below.

\begin{lemma}\label{AddConnected}
 Suppose $\mathcal A$ and $\mathcal B$ are bounded subsets of $\mathcal T$, such that there is an edge $e$ connecting $\mathcal A$ to $\mathcal B$. Let $\tilde S$ be a finite family of lines. If $\mathcal{WH}\lp \mathcal A\rp\setminus \tilde S$ is connected with no cut points, then $\mathcal{WH}\lp \mathcal A\cup \mathcal B\rp\setminus \tilde S$ contains the same number of connected components as $\mathcal{WH}\lp \mathcal B\rp\setminus \tilde S$.
\end{lemma}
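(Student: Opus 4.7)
My plan is to apply the generalized splicing construction reviewed in the paragraph just above the lemma to the edge $e$ and then track how each connected component of $\mathcal{WH}\lp\mathcal B\rp\setminus\tilde S$ contributes to the components of the spliced graph. Let $x\in\mathcal A$ and $y\in\mathcal B$ denote the endpoints of $e$, let $v$ be the vertex of $\mathcal{WH}\lp\mathcal A\rp$ corresponding to the component of $\overline{\mathcal T}\setminus\mathcal A$ containing $y$, and let $w$ be the vertex of $\mathcal{WH}\lp\mathcal B\rp$ corresponding to the component of $\overline{\mathcal T}\setminus\mathcal B$ containing $x$. By the splicing recipe, $\mathcal{WH}\lp\mathcal A\cup\mathcal B\rp\setminus\tilde S$ is obtained from $\lp\mathcal{WH}\lp\mathcal A\rp\setminus\tilde S\rp\mathdash v$ and $\lp\mathcal{WH}\lp\mathcal B\rp\setminus\tilde S\rp\mathdash w$ by identifying the loose edges in pairs, one pair for each line of $\mathcal L\setminus\tilde S$ crossing $e$.

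The first main step is to feed in the hypothesis. Since $\mathcal{WH}\lp\mathcal A\rp\setminus\tilde S$ is connected and has no cut points, the graph $\lp\mathcal{WH}\lp\mathcal A\rp\setminus\tilde S\rp\mathdash v$ stays connected, with its loose edges attached as pendants to the remaining vertex set. Call this single connected piece $P_{\mathcal A}$; every loose edge on the $\mathcal A$ side of the splice lies inside $P_{\mathcal A}$.

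The second step decomposes the $\mathcal B$ side. Let $C_0,C_1,\dots,C_k$ be the connected components of $\mathcal{WH}\lp\mathcal B\rp\setminus\tilde S$, with $w\in C_0$. The loose edges of $\lp\mathcal{WH}\lp\mathcal B\rp\setminus\tilde S\rp\mathdash w$ are precisely the edges of $\mathcal{WH}\lp\mathcal B\rp\setminus\tilde S$ that were incident on $w$, so they all lie in $C_0$, while the components $C_1,\dots,C_k$ are untouched by the deletion and carry no loose edges at all. The splicing therefore glues every piece of $C_0\mathdash w$ to $P_{\mathcal A}$ through paired loose edges, fusing them all into a single component of $\mathcal{WH}\lp\mathcal A\cup\mathcal B\rp\setminus\tilde S$, while each $C_i$ with $i\geq 1$ survives as its own component. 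This produces a total of $k+1$ components, matching the count for $\mathcal{WH}\lp\mathcal B\rp\setminus\tilde S$.

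The step I expect to require the most care is the degenerate situation where $v$ and $w$ have no incident edges in $\mathcal L\setminus\tilde S$: then no loose edges are produced and the $\mathcal A$ and $\mathcal B$ sides remain disjoint after the splice. In that case $w$ is isolated in $\mathcal{WH}\lp\mathcal B\rp\setminus\tilde S$, forcing $C_0=\{w\}$, and $P_{\mathcal A}$ simply takes the place of $C_0$ in the component count. I would write this case out explicitly, together with the symmetric one where $\mathcal{WH}\lp\mathcal A\rp\setminus\tilde S$ reduces to the single vertex $v$, to confirm that the tally still matches.
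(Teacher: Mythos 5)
Your proof is correct and takes essentially the same approach as the paper's: both splice $\mathcal{WH}\lp\mathcal A\rp\setminus\tilde S$ and $\mathcal{WH}\lp\mathcal B\rp\setminus\tilde S$ along the edge $e$ and use the hypothesis to see that deleting $v$ from the connected, cut-point-free $\mathcal A$-side graph leaves a single connected piece carrying all the loose edges. The paper finishes by collapsing that piece to a point to recover $\mathcal{WH}\lp\mathcal B\rp\setminus\tilde S$ rather than tallying components explicitly, but this is only a cosmetic difference.
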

\begin{proof}
    Suppose that $v\in\mathcal B$ is the vertex adjacent to $e$, and $w\in\mathcal A$ is adjacent to $e$. Note that $v$ determines a vertex of $Wh\lp \mathcal A\rp$ and $w$ a vertex in $\mathcal{WH}\lp \mathcal B\rp$.  Since $\mathcal{WH}\lp \mathcal A\rp\setminus\tilde S$ is connected with no cut vertices, $\lp \mathcal{WH}\lp \mathcal A\rp\setminus\tilde S\rp\mathdash v$ is connected. Therefore, the graph $\mathcal{WH}\lp \mathcal A\cup \mathcal B\rp\setminus \tilde S$ is the splicing of $\lp\mathcal{WH}\lp \mathcal A\rp\setminus\tilde S\rp\mathdash v$ with $\lp \mathcal{WH}\lp \mathcal B\rp\setminus\tilde S\rp\mathdash w$. By collapsing the portion of $\mathcal{WH}\lp \mathcal A\cup \mathcal B\rp\setminus \tilde S$ corresponding to $\lp\mathcal{WH}\lp \mathcal A\rp\setminus \tilde S\rp\mathdash v$ to a point, we do not change the number of connected components, but this results in the graph $\mathcal{WH}\lp \mathcal B\rp\setminus \tilde S$.
\end{proof} 

\begin{exmp}
    If $\mathcal{WH}\lp \{p\}\rp$ is connected with no cut vertices and $q$ is adjacent to $p$, then $\mathcal{WH}\lp \left[p,q\right]\rp$ is connected with no cut vertices by Lemma \ref{AddConnected}. Furthermore, as line patterns are invariant under translation, induction and the lemma show that $\mathcal{WH}\lp \mathcal A\rp$ is connected with no cut vertices for any bounded subtree $\mathcal A\subset \mathcal T$.
\end{exmp}

\subsection{Connectivity of the Boundary from Whitehead Graphs}

The following results, culminating in Lemma \ref{Kconnectedimpliesnosmall}, will be used to show that if $\Per$ is generated by a random collection of elements of $\Free$, then the size of the smallest cut sets in $\partial\FreeRelPer$ will increase with the length of the words generating $\Per$. Numerous results and ideas from Section 4 of \cite{CashenMacuraLinePatterns} will be used throughout, and the reader is referred to that paper for details on Lemmas \ref{HullGood}, \ref{CoreGood}, and \ref{cases}. Lemma \ref{noweirdcutpairs} is proved using similar ideas to that of \cite{CashenManning}*{Proposition 5.4}, but a stronger notion than that of full words is needed. This will be discussed in this subsection, starting with Definition \ref{FullRef}.

Suppose that $\partial\FreeRelPer$ is connected. A \emph{cut set} $S\subseteq \partial\FreeRelPer$ is a finite collection of points such that $\partial\FreeRelPer\setminus S$ is disconnected. The cut set $S$ is \emph{minimal} if there is no proper subset $S'\subsetneq S$ such that $S'$ is a cut set. A \emph{cut point} is a cut set of size one, and a \emph{cut pair} is a minimal cut set of size two. 

Fix a Cayley tree $\mathcal T$ for $\Free$. Let $q:\partial \mathcal T\rightarrow \partial\FreeRelPer$ be the quotient map. A point $x\in \partial\FreeRelPer$ is said to be \emph{good} if $q^{-1}\lp x\rp$ is two points (corresponding to the endpoints of a line in the pattern generated by $\Per$), and \emph{bad} otherwise. Given a set points $S\subseteq \partial\FreeRelPer$ we can determine connectivity of $\partial\FreeRelPer\setminus S$ by the connectivity of a Whitehead graph using the following lemma.

\begin{lemma}{\cite{CashenMacuraLinePatterns}*{Lemma 4.9}}\label{HullGood}
   Suppose $S\subseteq \partial\FreeRelPer$ is a non-empty collection of points which is not a single bad point, and let $\mathcal H$ be the convex hull of $q^{-1}\lp S\rp$. There is a bijection of components of $\mathcal{WH}\lp \mathcal H\rp$ and components of $\partial\FreeRelPer\setminus S$. 
\end{lemma}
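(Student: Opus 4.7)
The plan is to build an explicit bijection $\Psi$ from components of $\mathcal{WH}\lp\mathcal H\rp$ to components of $\partial\FreeRelPer \setminus S$ by pushing each WH-vertex through the quotient $q$. For a vertex $X$ of $\mathcal{WH}\lp\mathcal H\rp$, i.e.\ a component of $\overline{\mathcal T}\setminus \mathcal H$, set $X_\partial := X \cap \partial\mathcal T$; since $q^{-1}\lp S\rp \subseteq \mathcal H$, this sits inside $\partial\mathcal T \setminus q^{-1}\lp S\rp$, and so $q\lp X_\partial\rp \subseteq \partial\FreeRelPer \setminus S$. For a connected component $C$ of $\mathcal{WH}\lp\mathcal H\rp$, define
\[
\Psi\lp C\rp \;:=\; \bigcup_{X \in C} q\lp X_\partial\rp.
\]
I would then verify four things: (a) each $\Psi\lp C\rp$ is connected; (b) the $\Psi\lp C\rp$ cover $\partial\FreeRelPer \setminus S$; (c) they are pairwise disjoint; and (d) each $\Psi\lp C\rp$ is clopen in $\partial\FreeRelPer \setminus S$. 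Together these force $\Psi$ to match WH-components with components of $\partial\FreeRelPer \setminus S$ bijectively.

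For (a), I would first argue that for a single vertex $X$ the set $q\lp X_\partial\rp$ is connected, by applying the standard connectedness argument for line-pattern decomposition spaces to the subtree $X$ together with those lines of $\mathcal L$ whose endpoints both lie in $X_\partial$. Then each WH-edge between $X$ and $Y$ is witnessed by a line $\ell \in \mathcal L$ with one endpoint $x \in X_\partial$ and the other $y \in Y_\partial$; since $q$ identifies $x$ with $y$, the images $q\lp X_\partial\rp$ and $q\lp Y_\partial\rp$ share a point, and iterating edge by edge along $C$ yields connectedness of $\Psi\lp C\rp$. Property (b) is immediate: any preimage of a point in $\partial\FreeRelPer \setminus S$ lies outside $\mathcal H$, hence in some component $X$, so is already in $\Psi\lp C\rp$ for the WH-component containing $X$. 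For (c), if a point $p$ lay in both $\Psi\lp C\rp$ and $\Psi\lp C'\rp$, then either a single preimage lies in $X \cap Y$ (forcing $X = Y$ and $C = C'$), or two distinct preimages are the two endpoints of a single line in $\mathcal L$, which contributes an edge in $\mathcal{WH}\lp\mathcal H\rp$ joining the corresponding vertices and again forces $C = C'$. For (d), each $X_\partial$ is clopen in $\partial\mathcal T$ (it is the cylinder of boundary points of the subtree $X$), so $q^{-1}\lp\Psi\lp C\rp\rp \cap \lp\partial\mathcal T \setminus q^{-1}\lp S\rp\rp$ is clopen, and by the definition of the quotient topology $\Psi\lp C\rp$ is clopen in $\partial\FreeRelPer \setminus S$.

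The main obstacle is the connectedness of $q\lp X_\partial\rp$ for a single vertex $X$ in step (a): although $X_\partial$ is totally disconnected in $\partial\mathcal T$, its image becomes connected only thanks to the identifications produced by lines whose both endpoints lie in $X_\partial$, and verifying this requires a careful local application of the connectedness theorem for decomposition spaces to the subtree $X$ with its induced line pattern, rather than to all of $\mathcal T$. Once this local connectedness is in hand, the remainder of the argument is essentially formal quotient-topology bookkeeping together with the definition of edges in the generalized Whitehead graph.
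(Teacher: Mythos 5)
This lemma is quoted directly from Cashen--Macura (\cite{CashenMacuraLinePatterns}*{Lemma 4.9}); the paper gives no proof of its own, so I can only assess your argument on its own terms. Your overall skeleton is the natural one, and steps (b), (c), (d) are correct and essentially formal: $q^{-1}\lp\Psi\lp C\rp\rp\cap\lp\partial\mathcal T\setminus q^{-1}\lp S\rp\rp$ equals $\bigcup_{X\in C}X_\partial$ precisely because any line joining some $X\in C$ to another component $Y$ puts $Y$ into $C$, and the restriction of $q$ to the saturated open set $\partial\mathcal T\setminus q^{-1}\lp S\rp$ is a quotient map onto $\partial\FreeRelPer\setminus S$, so each $\Psi\lp C\rp$ is indeed clopen.

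The genuine gap is in step (a), exactly where you locate the main obstacle, and your proposed fix does not close it. You cannot get connectedness of $q\lp X_\partial\rp$ by ``applying the standard connectedness argument to the subtree $X$ with its induced line pattern'': the Whitehead graphs of that induced pattern are not the Whitehead graphs of $\mathcal L$ in $\mathcal T$ --- every line exiting $X$ is discarded and the vertex pointing back toward $\mathcal H$ is absent --- and nothing you have shows these modified graphs are connected. More seriously, your argument never invokes the standing hypothesis (from the paragraph preceding the lemma) that $\partial\FreeRelPer$ is connected, and without it the statement is false: for $\mathcal L$ generated by the basis element $a$ of $\Free=\langle a,b\rangle$ and $S$ the parabolic point fixed by $\langle a\rangle$, every line other than the axis of $a$ lies in a single component of $\overline{\mathcal T}\setminus\mathcal H$, so $\mathcal{WH}\lp\mathcal H\rp$ is an infinite edgeless graph, while each $q\lp X_\partial\rp$ is itself disconnected and the two component counts disagree. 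Nor is connectivity of the one-vertex Whitehead graphs (the only thing one might hope to use ``locally'') enough to launch your argument: for $\mathcal L$ generated by $a^2b$, the vertex Whitehead graph is a connected path on the four directions, yet $\mathcal{WH}\lp\lb 1,a\rb\rp$ is already disconnected and so is the decomposition space. A correct treatment of (a) must go through the ambient graphs: supposing $q\lp X_\partial\rp$ splits into two nonempty compact pieces, compactness gives a finite subtree $T_0\subseteq X$ so that each component of $X\setminus T_0$ has boundary wholly in one piece, and one must then contradict this using connectivity of $\mathcal{WH}\lp T_0\rp$ computed in $\mathcal T$ with all of $\mathcal L$ --- in particular ruling out that the single vertex of $\mathcal{WH}\lp T_0\rp$ containing $\mathcal H$ is a cut vertex separating the two pieces. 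That is where the global hypotheses on $\partial\FreeRelPer$ enter, and it is the step your proposal leaves unproved.
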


Fullness was used in \cite{CashenManning} to show that $\Free$ doesn't split relative to $\Per$. To rule out the possibility of the other kinds of cut pairs in $\partial\FreeRelPer$, we need a slightly stronger concept.

\begin{definition}\label{FullRef}
    For a positive integer $k$, we say that a cyclically reduced word $w\in \Free$ is $k$\emph{-full} if $w$, considered as a cyclic word, contains as a subword every $\sigma\in\Free$ where $\sigma$ has length $k$.
\end{definition}

We note that if $w$ is $k$-full, then $w$ is $l$-full for any $l<k$. Additionally, the definition of \emph{full} in \cite{CashenManning} is slightly weaker than the definition of $3$-fullness in this paper (as either the word of length 3 or its inverse is needed to appear for the definition in \cite{CashenManning}.) An analogous definition as in \cite{CashenManning} would have worked in this paper, but the stronger definition makes the proof of the Lemma \ref{noweirdcutpairs} slightly easier.

\begin{lemma}[cf. \cite{CashenManning}*{Proposition 5.4}]\label{noweirdcutpairs}
    Suppose $\Per$ is a peripheral structure on $\Free$ containing a 4-full word. Then $\partial\FreeRelPer$ is connected, and does not contain any cut points or cut pairs.
\end{lemma}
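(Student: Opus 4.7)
The plan is to translate the statement into properties of generalized Whitehead graphs via Lemma \ref{HullGood} and then exploit the combinatorial strength of 4-fullness. Since 4-fullness implies 3-fullness, the connectedness of $\partial\FreeRelPer$, the absence of cut points, and the absence of cut pairs both of whose points are non-parabolic should all follow from the argument of \cite{CashenManning}*{Proposition 5.4}. The genuinely new content is ruling out cut pairs $S = \{x, y\}$ in which at least one of $x, y$ is parabolic.

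The first building block is that if $w \in \Per$ is 2-full, then $\mathcal{WH}\lp \{p\}\rp$ contains an edge between every pair of distinct vertices, so it is the complete graph on $2r$ vertices; since $r \geq 2$, this graph is 3-connected. By Lemma \ref{AddConnected}, applied inductively as in the example following its statement, $\mathcal{WH}\lp \mathcal A\rp$ is connected with no cut vertices for every bounded subtree $\mathcal A \subset \mathcal T$. A parallel splicing argument, starting from the 3-connectedness of the local graph rather than from 2-connectedness, should strengthen this to show $\mathcal{WH}\lp \mathcal A\rp$ also has no cut pair: removing one vertex of the spliced graph reduces to a graph that is itself the splicing of two 2-connected graphs along a non-cut vertex on each side, which is still connected.

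Given a candidate cut pair $S = \{x, y\}$ with $x$ parabolic, I would then analyze $\mathcal H = \text{convex hull of } q^{-1}\lp S\rp$. It is a finite union of one line in $\mathcal L$ (for $x$) with either a second line (if $y$ is parabolic) or a geodesic ray to $y$ (if $y$ is conical), joined by a finite tree. I would exhaust $\mathcal H$ by bounded subtrees $\mathcal A_n$ and pass the connectedness of each $\mathcal{WH}\lp \mathcal A_n\rp$ to $\mathcal{WH}\lp \mathcal H\rp$, which by Lemma \ref{HullGood} rules out $S$ as a cut pair. The main obstacle is the quantitative use of 4-fullness in this limit: when a full line $l \subset \mathcal H$ is absorbed, many edges of $\mathcal{WH}\lp \mathcal A_n\rp$ become loose, and 3-fullness suffices only to reroute around a single such line; when $\mathcal H$ contains two lines (the case that $y$ is also parabolic), one needs length-4 subwords of $w$ to produce $w$-translates bridging between components on opposite sides of both lines simultaneously. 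I would enumerate the few geometric configurations of $\mathcal H$ (two disjoint lines joined by a finite arc, two lines sharing a vertex, a line and a ray, and so on) and verify in each that the appropriate length-4 subwords present in any 4-full $w$ supply the bridging edges needed to keep $\mathcal{WH}\lp \mathcal H\rp$ connected.
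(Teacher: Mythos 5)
Your outline matches the paper's: establish connectedness and dispose of the case $\#q^{-1}\lp S\rp=2$ by citing \cite{CashenManning}*{Proposition 5.4}, then handle the remaining cut sets by enumerating the shapes of the convex hull $\mathcal H$ of $q^{-1}\lp S\rp$ (a tripod when $\#q^{-1}\lp S\rp=3$; an X or H when $\#q^{-1}\lp S\rp=4$) and showing $\mathcal{WH}\lp\mathcal H\rp$ is connected. The gap is that the crux is exactly the step you defer to ``I would verify in each \ldots that the appropriate length-4 subwords supply the bridging edges,'' and your diagnosis of where $4$-fullness enters is not the actual obstruction. In the paper, two components $X,Y$ of $\overline{\mathcal T}\setminus\mathcal H$ are joined by realizing each edge $s_i$ of the geodesic $\gamma$ between them as a line through a length-$3$ subword $c_{i-1}^{-1}s_ic_i$ whose auxiliary letters point \emph{out of} $\mathcal H$, so that consecutive edges share a genuine vertex of $\mathcal{WH}\lp\mathcal H\rp$. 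The difficulty is localized at a branch point $p$ of $\mathcal H$: when $r=2$ and $\mathcal H$ is X-shaped, all four directions at $p$ lie in $\mathcal H$, so every admissible $c_i$ lands back on a leg of $\mathcal H$ and fails to determine a vertex of the Whitehead graph. The fix is a single length-$4$ subword $c_{i-1}s_is_{i+1}c_{i+1}$ straddling $p$, which is where $4$-fullness (rather than $3$-fullness) is genuinely used. Your stated reason --- translates ``bridging between components on opposite sides of both lines simultaneously'' --- does not identify this branch-point issue, and without that maneuver the path construction breaks down in precisely the rank-$2$, X-shaped case.

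Separately, the machinery you set up first (3-connectivity of $\mathcal{WH}\lp\mathcal A\rp$ for bounded subtrees $\mathcal A$, followed by exhausting $\mathcal H$ by such $\mathcal A_n$) cannot deliver the conclusion and does not appear in the paper. As $\mathcal A_n$ grows to the unbounded tree $\mathcal H$, a single component of $\overline{\mathcal T}\setminus\mathcal A_n$ refines into infinitely many components of $\overline{\mathcal T}\setminus\mathcal H$, and the edges contributed by the lines of $\tilde S$ --- which do much of the connecting at every finite stage --- are absorbed and vanish. You acknowledge the second phenomenon, but the first is equally fatal: no vertex-connectivity statement about the bounded graphs survives this limit, so the direct subword construction on the hull itself is unavoidable and is where the whole proof lives.
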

\begin{proof}
    Since $\Per$ contains a 4-full word, \cite{CashenManning}*{Lemma 5.3} implies that $\Free$ does not split freely relative to $\Per$. Connectedness of $\partial\FreeRelPer$ follows from the fact that a disconnected relative boundary implies a free splitting relative to the peripheral structure \cite{BowditchRH}*{Proposition 10.1}. 
    
    Let $S\subseteq \partial\FreeRelPer$ be a cut point or cut pair. Let $\mathcal L$ be the pattern generated by $\Per$. There are 3 cases that need to be considered. When $q^{-1}\lp S\rp$ is two points, the result follows from \cite{CashenManning}*{Proposition 5.4}, since 4-full words are 3-full (which are full in the sense of \cite{CashenManning}). There are two other cases to consider. First, if $q^{-1}\lp S\rp$ contains three points, and the other if $q^{-1}\lp S\rp$ contains 4 points.

    When $q^{-1}\lp S\rp$ contains 3 points, the convex hull is a tripod, which we denote $\mathcal Y$. Let $p$ be the point of intersection of the three rays forming the tripod. Let $X$ and $Y$ be two vertices in $\mathcal{WH}\lp \mathcal Y\rp$. Let $x\subseteq X$ be the vertex closest to $\mathcal Y$ and define $y\in Y$ in the same way. Take $\gamma:\lb 0,D\rb$ to be the path $\mathcal T$ joining $x$ and $y$, parameterized to have unit speed. Let $s_i$ denote the label of the edge connecting $\gamma\lp i-1\rp$ to $\gamma\lp i\rp$. We note that $\gamma|_{\lb 1,D-1\rb}$ is a path through $\mathcal Y$.
    
    If $X$ and $Y$ are in the same component of $\mathcal Y\setminus \{p\}$, the proof follows very similarly as in \cite{CashenManning}. If $D<4$, since $\Per$ contains a 4-full word, there is a subword $s_1s_2$, or $s_1s_2s_3$, which contributes an edge connecting $X$ to $Y$ in $\mathcal{WH}\lp \mathcal Y\rp$. Suppose now that $D\geq 4$, as $\Per$ contains a 4-full word, there are subwords $s_1s_2c_2$, $c_{i-1}^{-1}s_ic_{i}$ for $2\leq i\leq D-2$, and $c_{D-2}^{-1}s_{D-1}S_D$, where $c_i$ is picked so that $c_{i-1}\neq s_{i}$ and $c_{i}\neq s_i^{-1}$, so that all such words are reduced. These subwords contribute edges creating a path in $\mathcal{WH}\lp \mathcal Y\rp$ between $X$ and $Y$, so that $X$ and $Y$ are in the same component of $\mathcal{WH}\lp \mathcal Y\rp$. 

    Now, suppose $X$ and $Y$ are contained in different components of $\mathcal T\setminus \{p\}.$ Then $p$ is in the image of $\gamma$, so suppose $p=\gamma\lp i\rp$. Using the argument above, there is a path in $\mathcal{WH}\lp \mathcal Y\rp$ given by $s_1s_2c_2$,...,$c_{i-2}s_{i-1}c_{i-1}$. We replace the next two edges by an edge given by the subword $c_{i-1} s_is_{i+1} c_{i+1}$, which exists due to 4-fullness. This ensures that $c_i$, which may have been picked to land on a different leg of $\mathcal Y$, and therefore not determined a vertex of $\mathcal{WH}\lp \mathcal Y\rp$, is omitted. We then continue with the path $c_{i+1}^{-1}s_{i+2}c_{i+2}$, $c_{i+2}^{-1}s_{i+3}$,..., $c_{D-2}s_{D-1}s_{D}$.

    The case where $q^{-1}\lp S\rp$ has 4 points is similar. When $q^{-1}\lp S\rp$ has 4 points, the convex hull of $q^{-1}\lp S\rp$ is either in the shape of the letter X or the letter H. In the first case, the point of intersection $p$ can be circumvented in the same way as above. In the second case, both points with a neighborhood isometric to a tripod can be worked around as above.
\end{proof}

\begin{remark}
    In the case where $q^{-1}\lp S\rp$ has 3 points, it is always possible to pick a $c_i$ adjacent to $p$ so that there is an edge $c_{i-1}^{-1} s_i c_i$ which does not end on a vertex in $\mathcal Y$. Alternatively, if $S=\{x,y\}$ and $q^{-1}\lp x\rp=\{x^+, x^-\}$ and $q^{-1}\lp y\rp =\{ y^+, y^-\}$, so $q^{-1}\lp S\rp$ has 4 points, and the convex hulls of $\{x^+,x^-\}$ and $\{y^+,y^-\}$ intersect at a unique point $p$, then the convex hull of $q^{-1}\lp S\rp$ is in the shape of the letter X. If additionally the rank of $\Free$ is $2$, such a $c_i$ cannot be picked, and 4-fullness must be used. Figure \ref{whX} is included to illustrate the method of proof.
\end{remark}

\begin{figure}
    \centering\includegraphics{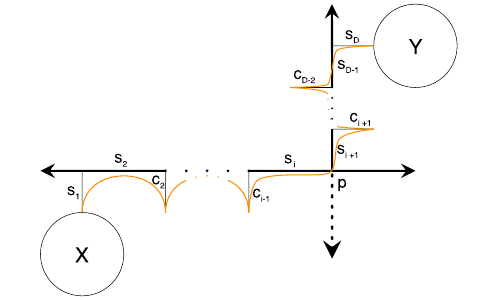}
    \caption{The set $\mathcal Y$ is illustrated by the bold black lines, where the dotted line is possibly included. The thin black lines specify edges in the Cayley graph which determine components of $\mathcal T\setminus \mathcal Y$. The orange arcs specify the edges in the Whitehead graph $\mathcal{WH}\lp \mathcal Y\rp$ connecting the components $X$ and $Y$.}
    \label{whX}
\end{figure}

In the case where $\partial\FreeRelPer$ is connected with no cut pairs, any finite minimal (with respect to inclusion) cut set is a collection of good points \cite{CashenMacuraLinePatterns}*{Lemma 4.19}. Suppose that $S$ is a finite minimal cut set of size $m\geq 3$, and $S$ is the image under $q$ of the endpoints of a collection of lines $\{l_1,...,l_m\}$ in the pattern generated by $\Per$. Let $\mathcal H$ be the convex hull of $q^{-1}\lp S\rp$. As in \cite{CashenMacuraLinePatterns}, we take the \emph{core} of $\mathcal H$ to be the smallest subtree $\mathcal C\subseteq \mathcal H$ such that $\mathcal H\setminus \mathcal C$ is a collection of disjoint rays. By Lemma \ref{HullGood} there is a bijection between components of $\mathcal{WH}\lp\mathcal H\rp$ and the components of $\partial\FreeRelPer\setminus S$. This is sharpened by the following lemma. Let $\tilde{S}$ denote the lines in the pattern generated by $\Per$ with endpoints in $q^{-1}\lp S\rp$.

\begin{lemma}{\cite{CashenMacuraLinePatterns}*{Proposition 4.20}}\label{CoreGood}
Let $S$ be a finite collection of good points, none of which is a cut point. Let $\mathcal H$ denote the convex hull of $q^{-1}\lp S\rp$, and $\mathcal C$ be the core of $\mathcal H$. Then there is a bijection between components of $\partial\FreeRelPer\setminus S$ and components of $$\mathcal{WH}\lp \mathcal C\rp\setminus \tilde{S}.$$
\end{lemma}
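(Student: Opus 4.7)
By Lemma \ref{HullGood}, components of $\partial\FreeRelPer\setminus S$ correspond bijectively to components of $\mathcal{WH}\lp\mathcal H\rp$, so the goal reduces to producing a bijection between components of $\mathcal{WH}\lp\mathcal H\rp$ and components of $\mathcal{WH}\lp\mathcal C\rp\setminus \tilde S$. Since each $x\in S$ is good, $q^{-1}\lp x\rp$ is the pair of endpoints of a unique line $l_x$, and $\tilde S$ is exactly the set of these lines. The hull $\mathcal H$ is obtained from $\mathcal C$ by attaching $2|S|$ disjoint rays, paired so that for each $x\in S$, two rays emanate from $\mathcal C$ and converge to the two endpoints of $l_x$ in $\partial\mathcal T$. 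Because each such ray shares an ideal endpoint with $l_x$, it eventually coincides with a half-line of $l_x$.

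The natural coarsening map $\phi$ sends each component of $\overline{\mathcal T}\setminus \mathcal H$ to the unique component of $\overline{\mathcal T}\setminus \mathcal C$ that contains it. A line $\ell$ contributes an edge to $\mathcal{WH}\lp\mathcal H\rp$ exactly when its endpoints lie in distinct components of $\overline{\mathcal T}\setminus \mathcal H$, which rules out the lines of $\tilde S$ (whose endpoints are absorbed into $\mathcal H$). Every line $\ell\notin \tilde S$ has both endpoints in $\partial\mathcal T\setminus q^{-1}\lp S\rp$; these endpoints still lie in distinct components of $\overline{\mathcal T}\setminus \mathcal H$ whenever they lay in distinct branches of $\overline{\mathcal T}\setminus \mathcal C$, and $\phi$ then carries the resulting edge of $\mathcal{WH}\lp\mathcal H\rp$ onto the corresponding edge of $\mathcal{WH}\lp\mathcal C\rp\setminus \tilde S$. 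Every such edge is hit, and since each branch of $\overline{\mathcal T}\setminus \mathcal C$ has at least one preimage under $\phi$, the induced map on components is surjective.

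The main obstacle is injectivity: for each ray-branch $B$ of $\overline{\mathcal T}\setminus \mathcal C$ subdivided by an attached ray $\rho$ converging to an endpoint of some $l_x$, one must show that all sub-branches of $B\setminus \rho$ lie in a single component of $\mathcal{WH}\lp\mathcal H\rp$. Here the no-cut-point hypothesis is crucial: applying Lemma \ref{HullGood} to $S'=\{x\}$ implies $\mathcal{WH}\lp l_x\rp$ is connected, so the branches transverse to $l_x$ are joined by non-$l_x$ lines. The tail of $\rho$ coincides with $l_x$, placing its sub-branches among these transverse branches and thus connecting them in $\mathcal{WH}\lp\mathcal H\rp$. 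The initial segment of $\rho$ (before it joins $l_x$) would be handled by an inductive splicing argument in the spirit of Lemma \ref{AddConnected}, adding one edge of $\rho$ at a time and tracking how each new vertex's local Whitehead graph attaches to the growing structure. The delicate point is verifying that the lines witnessing connectivity remain edges of $\mathcal{WH}\lp\mathcal H\rp\setminus \tilde S$, i.e., that their endpoints do not accidentally lie in $q^{-1}\lp S\rp$; this is ensured by the goodness of the points in $S$ and the distinctness of the lines $l_x$.
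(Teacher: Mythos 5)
First, a caveat: the paper does not actually prove this lemma --- it is imported verbatim from \cite{CashenMacuraLinePatterns}*{Proposition 4.20}, and the reader is explicitly referred to that paper for details. So there is no in-paper proof to compare against, and I can only assess your argument on its own terms.

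Your high-level strategy is the right one: reduce via Lemma \ref{HullGood} to comparing $\mathcal{WH}\lp\mathcal H\rp$ with $\mathcal{WH}\lp\mathcal C\rp\setminus\tilde S$, note that lines of $\tilde S$ contribute no edges to $\mathcal{WH}\lp\mathcal H\rp$, and identify the real content as injectivity of the induced map on components, i.e.\ that all components of $\overline{\mathcal T}\setminus\mathcal H$ contained in a single component of $\overline{\mathcal T}\setminus\mathcal C$ lie in one component of $\mathcal{WH}\lp\mathcal H\rp$. But the proof is genuinely incomplete at exactly that point, for two reasons. First, the step you defer (``would be handled by an inductive splicing argument'') is the whole theorem; nothing before it uses the no-cut-point hypothesis in an essential way. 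Incidentally, the complication you defer it for is vacuous: each ray component of $\mathcal H\setminus\mathcal C$ is attached to $\mathcal C$ at a point of $l_x$ itself, since otherwise the branch point where it first met $l_x$ would have degree at least $3$ in $\mathcal H$ and would be forced into the core; so $\rho$ is an honest subray of $l_x$ and there is no ``initial segment.'' Second, the partial argument you do give for the tail does not close. Connectivity of $\mathcal{WH}\lp l_x\rp$ (from applying Lemma \ref{HullGood} to $\{x\}$) says that the branches of $\overline{\mathcal T}\setminus l_x$ are joined by lines, but a path in $\mathcal{WH}\lp l_x\rp$ between two branches hanging off $\rho$ may route through branches attached to $l_x\cap\mathcal C$ or to the opposite subray, and the lines realizing its edges may cross $\mathcal H\setminus l_x$, so that in $\mathcal{WH}\lp\mathcal H\rp$ those edges join different (subdivided) vertices or disappear entirely. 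One needs a further argument --- for instance a translation or limiting argument exploiting the periodicity of $l_x$ and the connectivity without cut vertices of Whitehead graphs of long subsegments, in the spirit of Lemma \ref{AddConnected} --- to localize the connectivity to the branches along $\rho$ alone. As written, the proposal is a correct plan with the decisive step missing.
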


Cashen and Macura define the \emph{pruned core} as follows. Suppose that $\partial\FreeRelPer$ is connected so that for every $v\in \mathcal T$ we have that $\mathcal{WH}\lp \{v\}\rp$ is connected without cut points. Suppose $v$ is a degree 1 vertex in the core $\mathcal C$ of $S$. Call this vertex a \emph{leaf} and let the \emph{stem} be the edge connecting $v$ to the rest of $\mathcal C$. Denote the stem of $v$ by $st\lp v\rp$. As $st\lp v\rp$ also determines a vertex in $\mathcal{WH}\lp \{v\}\rp$, we will also use $st\lp v\rp$ to denote this vertex. If every line of $\mathcal S$ which appears in $\mathcal{WH}\lp \{v\}\rp$ goes through the stem, $st\lp v\rp$, then $\mathcal{WH}\lp \{v\}\rp\mathdash st\lp v\rp$ is connected, and furthermore $\lp\mathcal{WH}\lp \{v\}\rp\setminus \tilde S\rp\mathdash st\lp v\rp$ is connected. Therefore, $\mathcal{WH}\lp \mathcal C\setminus \{p\}\rp\setminus \tilde S$ has the same number of components as $\mathcal{WH}\lp \mathcal C\rp\setminus \tilde S$, so we can replace $\mathcal C$ with $\mathcal C\setminus\lp\{v\}\cup \textrm{Int}\lp st\lp v\rp\rp\rp$ and repeat the argument, pruning all leaves where each line of $\tilde S$ going through the leaf goes through the stem. At some point if there are two adjacent leaves, with all lines of $\tilde S$ running through both, then $S$ is a collection of all lines through an edge $e$, and Cashen and Macura retain this edge in the pruned core. What is left is neatly characterized by the following lemma.

\begin{lemma}{\cite{CashenMacuraLinePatterns}*{Lemma 4.22}}\label{cases}
Suppose there are no cut points or cut pairs in $\partial\FreeRelPer$. If $S$ is a cut set with pruned core $p\mathcal C$, then one of the following occurs:\begin{itemize}
    \item $S$ is the collection of endpoints of lines through an edge $e$, and $p\mathcal C$ is that edge
    \item $p\mathcal C$ is a single vertex
    \item $p\mathcal C$ has leaves and through every leaf there is a line in $\tilde S$ that does not go through the stem
\end{itemize}
    
\end{lemma}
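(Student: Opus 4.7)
The plan is to analyze the termination of the pruning procedure by case analysis on the resulting $p\mathcal{C}$. Recall that pruning iteratively removes a leaf $v$ of the current subtree together with the interior of its stem $st(v)$ whenever every line of $\tilde S$ through $v$ also passes through the stem vertex of $\mathcal{WH}\lp\{v\}\rp$; the process halts once no further leaf satisfies this criterion, with the special convention that if the current subtree is a single edge whose two endpoints both meet the criterion, then that edge is retained.

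Before the case analysis, I would verify that each prune preserves the number of components of the Whitehead graph. Concretely, removing leaf $v$ amounts to splicing away $\lp\mathcal{WH}\lp\{v\}\rp\setminus \tilde S\rp\mathdash st\lp v\rp$, which is connected by the pruning hypothesis. This is essentially the same idea used in Lemma \ref{AddConnected}, and it yields that $\mathcal{WH}\lp p\mathcal{C}\rp\setminus \tilde S$ has the same number of components as $\mathcal{WH}\lp \mathcal C\rp\setminus \tilde S$, which by Lemma \ref{CoreGood} equals the number of components of $\partial\FreeRelPer\setminus S$.

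Now analyze $p\mathcal{C}$. If $p\mathcal{C}$ has no leaves then, being a finite tree, it must be a single vertex, giving the second bullet. Otherwise $p\mathcal{C}$ has leaves, and the halting reason for each leaf $v$ is either that some line of $\tilde S$ through $v$ avoids $st\lp v\rp$---giving the third bullet when this holds at every leaf---or that $v$ is one of two adjacent leaves both satisfying the pruning criterion. Since in a tree two mutually-adjacent leaves can only occur when the whole tree is a single edge, the latter situation forces $p\mathcal{C}$ to be that single edge $e$, with every line of $\tilde S$ running through both endpoints and hence through $e$.

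The main obstacle will be closing out the first bullet: verifying that when $p\mathcal C=e$, the set $S$ is exactly the endpoints of all lines through $e$, not merely some subset. I would argue by contradiction. If some line $\ell$ through $e$ did not lie in $\tilde S$, then in $\mathcal{WH}\lp e\rp\setminus \tilde S$ the edge corresponding to $\ell$ would still join two vertices; tracing $\ell$ through $\partial\mathcal T$ and passing to the boundary via $q$ (using that $q$ identifies only endpoints of lines in the pattern and that the endpoints of $\ell$ give a single good point outside $S$) would produce a path in $\partial\FreeRelPer\setminus S$ joining two points lying in distinct components of the complement of the cut set, contradicting the bijection established above. This identifies $\tilde S$ with the full family of lines through $e$ and hence $S$ with their endpoints.
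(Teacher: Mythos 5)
You should first note that the paper gives no proof of this statement at all: it is quoted directly from \cite{CashenMacuraLinePatterns}*{Lemma 4.22}, so there is no in-paper argument to compare yours against. Judged on its own merits, your reconstruction has the right skeleton: pruning preserves the number of components of the Whitehead graph by the same splicing argument as Lemma \ref{AddConnected}, a finite tree with no leaves is a single vertex, and two mutually adjacent leaves force the tree to be a single edge. Those parts are fine, as is your observation that every line of $\tilde S$ meets the pruned core at every stage and hence, in the single-edge case, crosses $e$.

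The gap is in your treatment of the first bullet. You claim a line $\ell$ through $e$ with $\ell\notin\tilde S$ yields a contradiction because its surviving edge in $\mathcal{WH}\lp e\rp\setminus\tilde S$ joins two points ``lying in distinct components of the complement of the cut set.'' But the two endpoints of a surviving edge are by definition in the \emph{same} component, and nothing you have said places them in distinct components of $\partial\FreeRelPer\setminus S$; a graph can perfectly well be disconnected while retaining some particular edge, so no contradiction follows from what you wrote. The step that actually closes this case uses the standing hypothesis (implicit in the definition of the pruned core) that $\mathcal{WH}\lp\{v\}\rp$ and $\mathcal{WH}\lp\{w\}\rp$ are connected with no cut vertices, where $e=\lb v,w\rb$. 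Deleting the vertex $st\lp v\rp$ from $\mathcal{WH}\lp\{v\}\rp$ leaves the $2r-1$ vertices on the $v$-side of $\mathcal{WH}\lp e\rp$ connected to one another using only edges coming from lines through $v$ that avoid $e$, and likewise on the $w$-side. Since every line of $\tilde S$ crosses $e$, removing $\tilde S$ deletes only edges joining the $v$-side to the $w$-side, and each side stays internally connected; therefore $\mathcal{WH}\lp e\rp\setminus\tilde S$ can be disconnected only if \emph{every} crossing edge is deleted, i.e.\ only if $\tilde S$ is the full set of lines through $e$. That is the missing idea; with it in place the rest of your outline goes through.
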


The following lemma is a consequence of the facts summarized above. The proof of Lemma \ref{Kconnectedimpliesnosmall} when $p\mathcal C$ has leaves is summarized by the sketch in Figure \ref{whPandwhp}.

\begin{lemma}\label{Kconnectedimpliesnosmall}
Suppose that $\Per$ is a peripheral structure on $\Free$ containing a $4$-full word. Let $\mathcal L$ be the line pattern generated by $\Per$, and suppose $\mathcal{WH}\lp \{p\}\rp\{\mathcal L\}$ has the property that there are at least $k$ edges between each pair of vertices. Suppose $S$ is a minimal cut set of $\partial\FreeRelPer$. Then $S$ has at least $k$ elements.
\end{lemma}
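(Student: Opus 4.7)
The plan is to assume for contradiction that $|S| = m < k$ and show that the Whitehead graph of the core of $q^{-1}\lp S\rp$ is connected, contradicting the assumption that $S$ is a cut set. First, since $\Per$ contains a $4$-full word, Lemma \ref{noweirdcutpairs} gives that $\partial\FreeRelPer$ is connected and contains no cut points or cut pairs, so any minimal cut set has at least $3$ elements; this disposes of $k\leq 3$. Assume $k\geq 4$ and $|S|=m<k$. The text cites \cite{CashenMacuraLinePatterns}*{Lemma 4.19} that any minimal cut set is a collection of good points, so Lemma \ref{CoreGood} yields a bijection between components of $\partial\FreeRelPer\setminus S$ and components of $\mathcal{WH}\lp \mathcal C\rp\setminus \tilde S$, where $\mathcal C$ is the (finite) core of the convex hull of $q^{-1}\lp S\rp$ and $\tilde S$ is the set of $m$ lines determining $S$. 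Since $S$ is a cut set, $\mathcal{WH}\lp \mathcal C\rp\setminus \tilde S$ has at least two components.

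The crux is the following local observation: for every vertex $v \in \mathcal T$, the graph $\mathcal{WH}\lp \{v\}\rp\setminus \tilde S$ is connected and has no cut vertices. Each of the $m$ lines in $\tilde S$ is a geodesic in the tree $\mathcal T$, so it passes through $v$ at most once and contributes at most one edge to $\mathcal{WH}\lp \{v\}\rp$; hence at most $m$ edges are removed. Because the hypothesis guarantees at least $k$ edges between every pair of the $2r$ vertices of $\mathcal{WH}\lp \{v\}\rp$, at least $k - m \geq 1$ edge between each pair survives. Thus $\mathcal{WH}\lp \{v\}\rp\setminus \tilde S$ contains the complete graph $K_{2r}$ as a subgraph, which is connected; and since $2r - 1 \geq 3$, deleting any single vertex still leaves a graph containing $K_{2r-1}$, which is connected, so there are no cut vertices.

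With this in hand, I would iteratively apply Lemma \ref{AddConnected} to the finite subtree $\mathcal C$, adding one vertex (and its incident edge) at a time starting from a single vertex $v_0 \in \mathcal C$; this is exactly the kind of induction already used in the example following Lemma \ref{AddConnected}. At each step, the newly added vertex $v$ satisfies the hypothesis of Lemma \ref{AddConnected} by the local observation, so the number of components is preserved. Starting from the connected graph $\mathcal{WH}\lp \{v_0\}\rp\setminus \tilde S$, this shows $\mathcal{WH}\lp \mathcal C\rp\setminus \tilde S$ is connected, contradicting the earlier conclusion that it has at least two components. The main obstacle is the local observation: once one recognizes that the ``$\geq k$ edges per pair'' hypothesis on $\mathcal{WH}\lp \{v\}\rp$ leaves a full $K_{2r}$ intact after removing fewer than $k$ arbitrary edges, Lemma \ref{AddConnected} chains the argument together without any case analysis on the shape of the pruned core or appeal to Lemma \ref{cases}.
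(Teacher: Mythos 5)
Your argument is correct, and it takes a genuinely more direct route than the paper. The paper passes from the core to the \emph{pruned} core, invokes the trichotomy of Lemma \ref{cases}, and treats each case separately; in the leaf case it uses minimality of $S$ to force $\mathcal{WH}\lp\{v\}\rp\setminus\tilde S$ to be disconnected or to have a cut vertex, and only then contradicts this with the ``$\geq k$ edges per pair'' hypothesis. You instead notice that this local hypothesis is doing all the work everywhere at once: since each line of $\tilde S$ is a bi-infinite geodesic in the tree and hence contributes at most one edge to $\mathcal{WH}\lp\{v\}\rp$, removing $m<k$ lines leaves a complete multigraph on the $2r\geq 4$ vertices at \emph{every} vertex $v$, which is connected with no cut vertices; Lemma \ref{AddConnected} then chains over any enumeration of the (unpruned) core and shows $\mathcal{WH}\lp\mathcal C\rp\setminus\tilde S$ is connected outright, contradicting Lemma \ref{CoreGood}. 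Minimality enters your argument only through \cite{CashenMacuraLinePatterns}*{Lemma 4.19}, to guarantee $S$ consists of good points so that Lemma \ref{CoreGood} applies, whereas the paper also leans on it to run the leaf case. What each approach buys: the paper's route stays within the Cashen--Macura pruned-core machinery and its minimality argument is the one you would need under a weaker local hypothesis (e.g.\ mere connectivity of the vertex Whitehead graphs), while yours is shorter, needs no case analysis, and in fact proves the slightly stronger statement that no collection of fewer than $k$ good points separates the boundary. The only points worth making explicit in a write-up are the translation invariance of the line pattern (so the ``$\geq k$ edges'' hypothesis at the single vertex $p$ transfers to every vertex of the core) and the observation that a line not passing through $v$ contributes no edge of $\mathcal{WH}\lp\{v\}\rp$; both are noted elsewhere in the paper.
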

\begin{proof}
  Since $\Per$ contains a $4-$full word, $\partial\FreeRelPer$ does not contain cut points or cut pairs by Lemma \ref{noweirdcutpairs}. Let $p\mathcal C$ denote the pruned core of $S$. By Lemma \ref{CoreGood} and the following discussion, we know that $\mathcal{WH}\lp p\mathcal C\rp\setminus \tilde S$ is disconnected. We can then look at each case from Lemma \ref{cases}.
  
  Suppose $p\mathcal C$ is an edge, and $\tilde S$ is all lines running through that edge. Suppose the edge is $e$ and is labeled by the generator $a$. Then, the number of lines running through that edge is equal to the number of lines in $\mathcal{WH}\lp \{p\}\rp$ incident on the vertex corresponding to $a$. By assumption, there are more than $k$ edges incident to that vertex, so $S$ has more than $k$ elements.
  
  Suppose now that $p\mathcal C$ is a point $p$. Then, since $\mathcal{WH}\lp \{p\}\rp$ has at least $k$ edges connecting any pair of vertices, it remains connected after removing the interiors of any set of fewer than $k$ edges. Since $\mathcal{WH}\lp \{p\}\rp\setminus \tilde{S}$ is disconnected, there must be more than $k$ elements of $S$.
  
  Finally, suppose $p\mathcal C$ has leaves and through each leaf there is at least one line of $\tilde S$ not through the stem. Consider a leaf $v$, and the graph $\mathcal{WH}\lp \{v\}\rp\setminus \tilde S$. We know that $\mathcal{WH}\lp p\mathcal C\rp\setminus \tilde S$ is formed by splicing $\mathcal{WH}\lp \{v\}\rp\setminus \tilde S$ to $\mathcal{WH}\lp p\mathcal C\setminus \{v\}\rp\setminus \tilde S$. Let $x$ denote the vertex in $\mathcal{WH}\{v\}$ corresponding to the stem of $v$. By Lemma \ref{AddConnected}, if $\mathcal{WH}\{v\}\setminus \tilde S$ is connected with no cut points, then  $\mathcal{WH}\lp p\mathcal C\rp\setminus \tilde S$ has the same number of components as $\mathcal{WH}\lp p\mathcal C\setminus \{v\}\rp\setminus \tilde S$.
  
  Take $\tilde S'\subsetneq \tilde S$ to be the proper subset of $\tilde S$ which does not contain the lines through $v$ that don't pass through $st\lp v\rp$. Let $S'\in \partial\FreeRelPer$ be the image of the set of endpoints of $\tilde S'$ under $q$. The pruned core of $S$ contains the pruned core of $S'$, and $\mathcal{WH}\lp p\mathcal C\rp\setminus \tilde S'$ is disconnected. Therefore, the collection of points $S'\subset\partial \FreeRelPer$ is a cut set contained in $S$ which is strictly smaller than $S$. This contradicts the minimality of $S$. Therefore, $\mathcal{WH}\lp\{v\}\rp\setminus \tilde S$ must either be disconnected or contain a cut point. If $S$ contains fewer than $k$ points, then there is at least one edge between any pair of points in $\mathcal{WH}\lp\{v\}\rp\setminus \tilde S$, so this is impossible. Therefore, $S$ must have size at least $k$.
  \end{proof}

\begin{figure}[h!]
     \centering\includegraphics[width=.75\textwidth]{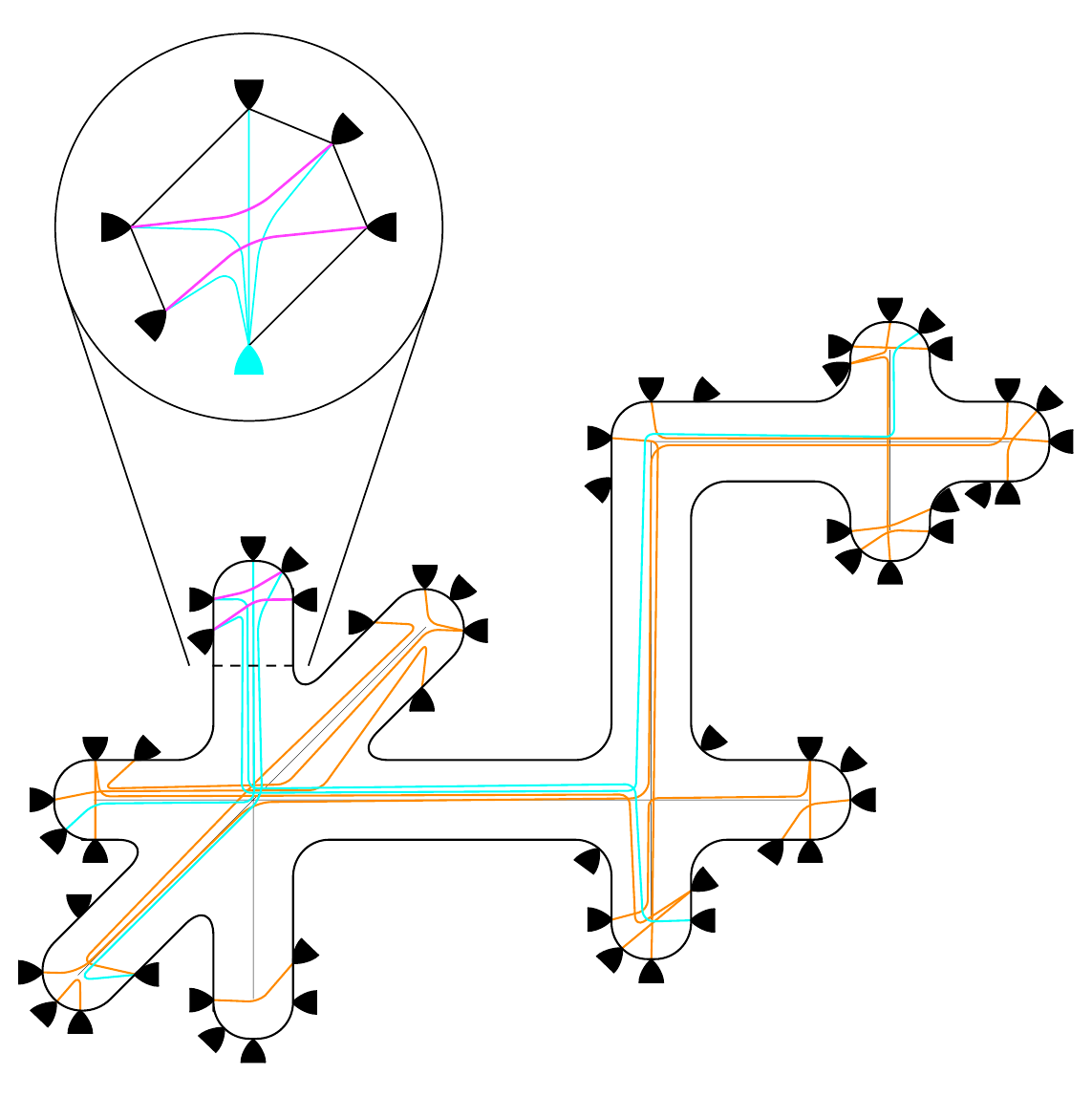}    
    \caption{The triangles represent components of $\mathcal T\setminus X$ where $X=p\mathcal C$ in the main image, and $X=v$ in the top left. These are the vertices in the appropriate Whitehead graphs. The blue, purple, and orange lines represent the elements of $\tilde S$. If the blue and purple lines with the blue vertex don't disconnect $\mathcal{WH}\lp\{v\}\rp$ (pictured top left), then the purple lines are not a necessary part of the cut set, so $S$ is not minimal.}
    \label{whPandwhp}
\end{figure}

\section{Randomness}\label{Randomness}
Following \cite{CashenManning}, we will use the fact that certain properties are \emph{exponentially generic} to show that those properties occur in any setting where the probability is chosen uniformly randomly. 

\begin{definition}[Exponentially Generic]
    Let $A\subset B\subset \Free$ be subsets and let $\mathcal B$ be a fixed basis of $\Free$. The subset $A$ is \emph{generic} in $B$ if $$\lim_{n\rightarrow\infty}\frac{\#\{w\in A|\|w\|_{\mathcal B}\leq n\}}{\#\{w\in A|\|w\|_{\mathcal B}\leq n\}}=1.$$
    $A$ is \emph{exponentially generic} in $B$ if there exists $c>0$ and $b\in \mathbb R$ such that for sufficiently large $n$, $$1-\frac{\#\{w\in A|\|w\|_{\mathcal B}\leq n\}}{\#\{w\in A|\|w\|_{\mathcal B}\leq n\}}\leq e^{b-cn}.$$ 
\end{definition}

The complement $B\setminus A$ is \emph{exponentially rare} in $B$ if $A$ is exponentially generic in $B$. We will use without reference the fact that if $A$ is exponentially rare in $B$, then there is a $c>0$ and $b\in \mathbb R$ such that for sufficiently large $n$, we have $$\frac{\#\{w\in A|\|w\|_\mathcal{B}\}}{\#\{w\in A|\|w\|_\mathcal{B}\}}\leq e^{b-cn}.$$

We remark that the intersection of exponentially generic sets is exponentially generic, and that if $A\subset B\subset C$ with $A$ exponentially generic in $C$, then $B$ is exponentially generic in $C$.

We use $\mathcal C$ to denote the set of cyclically reduced words in $\Free$.

\begin{lemma}[cf \cite{CashenManning}*{Lemma 2.4}]\label{ShellOnly}
     Suppose $A\subset \Free$ has the property that $$\frac{\#\{x\in A| \|x\|_{\mathcal B}=n\}}{\#\{x\in \mathcal C|\|x\|_\mathcal{B}=n\}}\leq e^{b-cn}$$ for some $c>0$ and $b\in \mathbb R$ and all sufficiently large $n$. Then $A$ is exponentially rare in $\mathcal C$.
\end{lemma}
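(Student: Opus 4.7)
The plan is to reduce a statement about balls (cumulative counts) to one about shells (single-length counts), by exploiting the fact that the number of cyclically reduced words in $\Free$ grows exponentially at rate $2r-1$. Set
$s(n)=\#\{x\in\mathcal C:\|x\|_\mathcal B=n\}$, $a(n)=\#\{x\in A:\|x\|_\mathcal B=n\}$,
and let $S(n), A(n)$ denote the corresponding ball counts. Since $r\geq 2$, a standard count (for instance, by counting cyclic words and bounding the overcount by $n$) yields constants $c_1, c_2>0$ with $c_1(2r-1)^n\leq s(n)\leq c_2(2r-1)^n$, and hence also $S(n)\asymp(2r-1)^n$.

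First I would fix these asymptotics and record a constant $K$ with $S(n)\geq K^{-1}(2r-1)^n$ for all sufficiently large $n$. By hypothesis there is an $N$ so that $a(k)\leq e^{b-ck}s(k)$ whenever $k\geq N$; combined with $s(k)\leq c_2(2r-1)^k$ this gives $a(k)\leq c_2 e^{b}\cdot e^{k(\log(2r-1)-c)}$ for $k\geq N$.

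Next I would split $A(n)=A(N-1)+\sum_{k=N}^{n}a(k)$. The first summand is a finite constant depending on $A$ and $N$. The second sum is a geometric series in $e^{\log(2r-1)-c}$. Regardless of whether this ratio is greater than, equal to, or less than $1$, standard geometric-series bounds produce an estimate of the form $A(n)\leq C(2r-1)^n e^{-c'n}$ for some $c'>0$; to avoid splitting into cases, one may simply take $c'=\tfrac{1}{2}\min\{c,\log(2r-1)\}$ and absorb the mild loss into the constant. Dividing by $S(n)\geq K^{-1}(2r-1)^n$ yields $A(n)/S(n)\leq e^{b'-c'n}$ for all sufficiently large $n$, which is exactly the conclusion that $A$ is exponentially rare in $\mathcal C$.

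The only real obstacle is keeping the three regimes $c>\log(2r-1)$, $c=\log(2r-1)$, and $c<\log(2r-1)$ organized cleanly; as noted, a single suboptimal choice of exponent collapses them into a uniform bound. Everything else — the comparability $S(n)\asymp(2r-1)^n$, and the fact that the hypothesis fails only on a finite range that contributes a bounded additive error — is standard and essentially formal.
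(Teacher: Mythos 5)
Your argument is correct and is essentially the same as the paper's: both split the ball count $A(n)$ into a bounded initial segment plus a tail where the shell hypothesis applies, compare against the exponential growth $\#\mathcal C_n^\circ\asymp(2r-1)^n$, and sum the resulting geometric-type series, with a deliberately suboptimal exponent $c'\leq\min\{c,\log(2r-1)\}$ absorbing the case analysis. The only cosmetic difference is that the paper bounds the tail sum by $(n-k_0)$ times its largest term rather than by the closed-form geometric series, which changes nothing substantive.
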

\begin{proof}
    We let $A_n$ denote $\{ x\in\mathcal A|\|x\|_\mathcal{B}\leq n\}$ and $A_n^\circ$ denote $\{ x\in\mathcal A|\|x\|_\mathcal{B}= n\}$. We define $\mathcal C_n$ and $\mathcal C_n^\circ$ similarly. By assumption, we have that for sufficiently large $n$, \begin{equation}\label{ShellOnlyAssumption}
    \frac{\# A_n^\circ}{\#\mathcal{C}_n^\circ}\leq e^{b-cn}
    \end{equation}
    and we need to establish that $\frac{\#A_n}{\#\mathcal C_n}\leq e^{b'-c'n}$ for all sufficiently large $n$ and some $b'\in\mathbb R$ and $c'>0$.

    First, we have that $\#\mathcal{C}_n^\circ = \lp 2r-1\rp^n +r + \lp -1\rp^n\lp r-1\rp$ (see, for example \cite{MannHGG}*{Proposition 17.2}). Since $r\geq2$, for $m>k$, we have that $\# \mathcal{C}_m^\circ>\lp 2r-1\rp^m$ and $\#\mathcal{C}_k^\circ\leq 2\lp 2r-1\rp^k$, so we have that
     \begin{align*} 
      \frac{\#\mathcal{C}_m^\circ}{\#\mathcal{C}_k^\circ}&\geq\frac{\lp 2r-1\rp^m}{2\lp 2r-1\rp^k}\\
        &=\frac{1}{2}\lp 2r-1\rp^{m-k}
     \end{align*} 
    so that $\#\mathcal{C}_m^\circ\geq \frac{1}{2}\lp 2r-1\rp^{m-k} \#\mathcal{C}_k^\circ $

    Let $k_0$ be large enough that for $n>k_0$ we have that (\ref{ShellOnlyAssumption}) holds, and take $N<k_0$ to be the index such that $\#A^\circ_N$ is maximal in the finite set $\{\#A^\circ_i\}_{i=0}^{k_0}.$ Then, the following holds for $n>k_0$:
    \begin{align*}
        \frac{\# A_n}{\# \mathcal{C}_n}&=\frac{\sum_{i=1}^n \#A_n^\circ}{\#\mathcal{C}_n}\\
        &\leq \frac{\sum_{i=k_0}^n \#A_i^\circ}{\#\mathcal{C}_n^\circ}+\frac{\sum_{i=0}^{k_0} \#A_i^\circ}{\#\mathcal{C}_n}\\
        &\leq \sum_{i=k_0}^{n}\frac{\#A_i^\circ}{\frac{1}{2}\lp 2r-1\rp^{n-i}\#\mathcal{C}_i^\circ} +\frac{k_0 \#A_N}{\#\mathcal{C}_n}\\
        &\leq\sum_{i=k_0}^n\frac{2e^{b-ci}}{\lp 2r-1\rp^{n-i}}+\frac{k_0\#A_N}{\lp 2r-1\rp^n}\\
        &=\sum_{i=k_0}^n e^{\ln\lp 2\rp+b-ci-\lp n-i\rp \ln{\lp 2r-1\rp}}+e^{\ln{\lp k_0\#A_N\rp}-\ln{\lp 2r-1\rp}n}\\
        &\leq\sum_{i=k_0}^n e^{b'+\lp\ln{\lp 2r-1\rp}-c'\rp i-n\ln\lp 2r-1\rp}+e^{\ln{\lp k_0\#A_N\rp}-c'n}\\
        &\leq \lp n-k_0\rp e^{b+\lp\ln\lp 2r-1\rp-c'\rp n-n\ln\lp 2r-1\rp}+e^{\ln{\lp k_0\#A_N\rp}-c'n}\\
        &= e^{b'-c'n+\ln\lp n-k_0\rp}+e^{\ln{\lp k_0\#A_N\rp}-c'n}\\
        &\leq e^{b'-c'n+\ln\lp n\rp}+e^{\ln\lp k_0\#A_N\rp-c'n}\\
        &\leq e^{b'-c''n}+e^{\ln\lp k_0\#A_N\rp-c''n}\\
        &\leq e^{b''-c''n}
    \end{align*}

where $b'=\ln\lp 2\rp+b$, $c'\leq \min\{ c, \ln\lp 2r-1\rp\}$, $c''<c'$ and $b''\geq \ln\lp 2\rp+\max\{b', \ln\lp k_0\#A_N\rp\}$, when $n$ is sufficiently large.
\end{proof}

The converse of Lemma \ref{ShellOnly} will also be useful in the proof of Lemma \ref{rarereduction}.

\begin{lemma}\label{CM24converse}
    Using the notation as in Lemma \ref{ShellOnly}, if $A$ is exponentially rare in $\mathcal C$, then there are $b'\in \mathbb R$, $c'>0$ and $M'$ such that for all $n>M'$, $$\frac{\#A_n^\circ}{\#\mathcal C_n^\circ}\leq e^{b'-c'n}.$$
\end{lemma}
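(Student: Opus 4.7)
My plan is to derive the shell bound from the ball bound by comparing the sizes of the cyclically reduced balls and shells, exploiting the fact that the shell $\mathcal C_n^\circ$ already comprises a definite fraction of the ball $\mathcal C_n$. The trivial inclusions $\#A_n^\circ \le \#A_n$ and $\#\mathcal C_n^\circ \le \#\mathcal C_n$ give
\[
\frac{\#A_n^\circ}{\#\mathcal C_n^\circ} \le \frac{\#A_n}{\#\mathcal C_n^\circ},
\]
so it suffices to show there is a constant $K = K(r)$ with $\#\mathcal C_n \le K\,\#\mathcal C_n^\circ$ for all sufficiently large $n$. Combining this with the hypothesis $\#A_n/\#\mathcal C_n \le e^{b - cn}$ immediately yields
\[
\frac{\#A_n^\circ}{\#\mathcal C_n^\circ} \le K\cdot\frac{\#A_n}{\#\mathcal C_n} \le e^{(b + \ln K) - cn},
\]
so the constants $b' = b + \ln K$ and $c' = c$ work.

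The comparison $\#\mathcal C_n \le K \#\mathcal C_n^\circ$ is a straightforward geometric-series estimate using the explicit formula $\#\mathcal C_i^\circ = (2r-1)^i + r + (-1)^i(r-1)$ cited in the proof of Lemma \ref{ShellOnly}. For $i$ sufficiently large we have $(2r-1)^i \le \#\mathcal C_i^\circ \le 2(2r-1)^i$, and writing $\#\mathcal C_n = \sum_{i=0}^n \#\mathcal C_i^\circ$ as a geometric sum dominated by its top term gives
\[
\#\mathcal C_n \le 2\sum_{i=0}^n (2r-1)^i + C_0 \le \frac{2(2r-1)}{2r-2}(2r-1)^n + C_0',
\]
which, since $\#\mathcal C_n^\circ \ge (2r-1)^n$ for $n \ge 1$ and $r \ge 2$, gives $\#\mathcal C_n \le K\,\#\mathcal C_n^\circ$ for $n$ large enough, where $K$ depends only on $r$.

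There is no real obstacle here — the statement is essentially a monotonicity observation, since in a free group the shell count dominates the ball count up to a bounded factor. The only care needed is to handle the small values of $n$ where the additive constants in the formula for $\#\mathcal C_i^\circ$ matter, which is why the conclusion is only claimed for $n > M'$ for some threshold $M'$; this $M'$ is absorbed into the constant $b'$ exactly as in the proof of Lemma \ref{ShellOnly}. The proof is therefore a few lines, and its role in the paper is simply to let later arguments (notably Lemma \ref{rarereduction}) pass freely between the ball formulation and the more convenient shell formulation of exponential rarity.
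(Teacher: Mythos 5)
Your proof is correct, and it rests on the same underlying observation as the paper's --- namely $\#A_n^\circ\leq\#A_n$ together with a comparison of $\#\mathcal C_n$ to $\#\mathcal C_n^\circ$ --- but the execution differs in two ways worth noting. The paper argues by contradiction and uses only the crude bound $\#\mathcal C_n\leq (n+1)\#\mathcal C_n^\circ$, which forces it to absorb a $\ln(n+1)$ term into the exponent and therefore to take $c'$ strictly smaller than $c$. You instead argue directly and use the geometric-series estimate $\#\mathcal C_n\leq K\,\#\mathcal C_n^\circ$ with $K=K(r)$ a constant, which lets you keep $c'=c$ and only shift $b$ to $b+\ln K$; this is a cleaner argument and a marginally stronger conclusion (the decay rate is preserved), and it avoids the bookkeeping with $M''$ and the choice of $n$ that the contradiction format requires. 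Your handling of the small-$n$ additive constants in $\#\mathcal C_i^\circ=(2r-1)^i+r+(-1)^i(r-1)$ via the threshold $M'$ is also fine. In short: same key comparison, but your direct formulation is the tidier of the two and is entirely adequate for the later use in Lemma \ref{rarereduction}.
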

\begin{proof}
    In order to obtain a contradiction, suppose that for any $b'\in \mathbb R$, $c'>0$, and $M'$, there is an $n>M'$ so that $\frac{\#A_n^\circ}{\#\mathcal C_n^\circ}>e^{b'-c'n}$. Let $b\in\mathbb R$, $c>0$, and $M$ be such that for all $n>M$, we have $\frac{\#A_n}{\#\mathcal C_n}\leq e^{b-cn}$. Pick $b'>b$, $c'$ such that $0<c'<c$, and $M'$ so that for all $n>M'$, we have $-\ln\lp n+1\rp-c'n\geq cn$. Take $M''$ so that $M''>\max\{M,M'\}$. Then pick an $n>M''$ so that $\frac{\#A_n^\circ}{\#\mathcal C_n^\circ}>e^{b'-c'n}$. Then we have that \begin{align*}
        \frac{\#A_n}{\#\mathcal C_n}&\geq \frac{\#A_n^\circ}{\sum_{i=0}^n\#\mathcal C_n^\circ}\\
        &>\frac{\#A_n^{\circ}}{\lp n+1\rp\#C_n^\circ}\\
        &>e^{b'-c'n-\ln\lp n+1\rp}\\
        &>e^{b-cn}
    \end{align*}
    which is the required contradiction.
\end{proof}

Recall that for a reduced word $\Gamma$, we take $\Gamma'$ to be the cyclic reduction, and $\sqrt{\Gamma}$ to be the generator of the maximal cyclic subgroup containing $\Gamma$.

The following lemma appears without proof in \cite{KSSGenericProperties}. For completeness we include a proof here.

\begin{lemma}[\cite{KSSGenericProperties}*{Proposition 6.2(2)}]\label{rarereduction}
    Suppose that $B$ is exponentially generic in $\mathcal C$. Let $A$ be the set of words $w\in \Free$ such that $w'\in B$. Then $A$ is exponentially generic in $\Free$.
\end{lemma}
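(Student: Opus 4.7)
The plan is to show that the complement $\Free\setminus A=\{w\in\Free : w'\notin B\}$ is exponentially rare in $\Free$. The key structural fact I would exploit is that every reduced word $w\in\Free$ admits a \emph{unique} canonical decomposition $w=uw'u^{-1}$, where $w'$ is the cyclic reduction of $w$ and $u$ is a (possibly empty) reduced prefix whose last letter $s$ satisfies $s\neq v_1^{-1}$ and $s\neq v_m$ for $w'=v_1\cdots v_m$. In particular, given $v\in\mathcal{C}$ of length $m$, the number of reduced words of length $n=m+2k$ with $w'=v$ is at most the number of reduced words of length $k$, namely $a_k:=2r(2r-1)^{k-1}$ for $k\geq1$ (and $a_0=1$).

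First, I would apply Lemma \ref{CM24converse} to the hypothesis that $B$ is exponentially generic in $\mathcal{C}$, obtaining constants $b'\in\mathbb{R}$ and $c'>0$ such that $\#(\mathcal{C}\setminus B)_m^\circ\leq e^{b'-c'm}\#\mathcal{C}_m^\circ$ for all sufficiently large $m$. Combining this with the explicit upper bound $\#\mathcal{C}_m^\circ\leq 2(2r-1)^m$ used in the proof of Lemma \ref{ShellOnly} produces a clean shell estimate of the form $\#(\mathcal{C}\setminus B)_m^\circ\leq D(2r-1)^m e^{-c'm}$ for some constant $D$.

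Next, I would use uniqueness of the canonical decomposition to bound the shell of the complement:
\[
\#\{w\in\Free\setminus A : \|w\|_{\mathcal{B}}=n\}\leq \sum_{k=0}^{\lfloor n/2\rfloor} a_k\cdot \#(\mathcal{C}\setminus B)_{n-2k}^\circ.
\]
Substituting the shell estimate, this is majorized by $D'(2r-1)^n e^{-c'n}\sum_{k=0}^{\lfloor n/2\rfloor}\bigl(e^{2c'}/(2r-1)\bigr)^k$. By shrinking $c'$ if necessary to guarantee $e^{2c'}<2r-1$ (permissible since $r\geq 2$), the geometric series is bounded by a constant independent of $n$. Dividing by $\#\{w\in\Free : \|w\|_{\mathcal{B}}=n\}=2r(2r-1)^{n-1}$ then yields exponential decay of the shell ratio for $\Free\setminus A$.

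Finally, I would pass from shell decay to ball decay by repeating the telescoping computation from the proof of Lemma \ref{ShellOnly}, which depends only on the analogous exponential-growth inequality $\#\{w\in\Free : \|w\|_{\mathcal{B}}=m\}\geq \tfrac{1}{2}(2r-1)^{m-k}\#\{w\in\Free : \|w\|_{\mathcal{B}}=k\}$, valid verbatim in $\Free$ in place of $\mathcal{C}$. The main quantitative subtlety is ensuring that $c'$ can be chosen small enough that $e^{2c'}<2r-1$; this is the one place where $r\geq 2$ is used, and once it is secured the geometric summation resolves the argument. Everything else is bookkeeping once the decomposition $w=uw'u^{-1}$ is in hand.
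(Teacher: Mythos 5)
Your proposal is correct and follows essentially the same route as the paper: the unique decomposition $w=uw'u^{-1}$ with the constraint on the last letter of $u$ is exactly the paper's count of conjugators (the paper gets the exact figure $\lp 2r-2\rp\lp 2r-1\rp^{k-1}$ where you use the slightly weaker bound $2r\lp 2r-1\rp^{k-1}$), the shell estimate via Lemma \ref{CM24converse}, the geometric series controlled by shrinking $c'$ so that $e^{2c'}<2r-1$, and the shell-to-ball passage all match the paper's argument. The only bookkeeping point to make explicit is that the shell estimate for $\mathcal C\setminus B$ holds only for sufficiently large $n-2k$, so the finitely many terms with $n-2k$ small must be split off and bounded by a constant times $\lp 2r-1\rp^{n/2}$, exactly as the paper does with its $k_0$ and $C=\max\{\#B_i^\circ\}$.
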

\begin{proof}
   It is sufficient to show that if $B$ is exponentially rare in $\mathcal C$, then $A$ is exponentially rare in $\Free$. Suppose that $b\in \mathbb R$ and $c>0$ are such that $\frac{\#B_n}{\#\mathcal C_n}\leq e^{b-cn}$ for sufficiently large $n$.
   
   By \cite{CashenManning}*{Lemma 2.4}, it is sufficient to show that there are $b'\in\mathbb R$ and $c'>0$ such that $\frac{\#A_n^\circ}{\#\Free_n^\circ}\leq e^{b'-c'n}$.

   We note that for a cyclically reduced word $w$ of length $k$, there are $$\lp 2r-2\rp\lp 2r-1\rp^{\lp n-k\rp/2-1}$$ words of length $n$ whose cyclic reduction is $w$ if $n-k$ is even, and $0$ otherwise. This is because any word whose cyclic reduction is $w$ can be written as $a^{-1}wa$. If $i$ is the length of $a$, this word has length $k+2i$. The conditions on $a$ are that the first letter of $a$ is not the inverse of the last letter of $w$, or the first letter of $w$. These are different as $w$ is cyclically  reduced, giving $2r-2$ choices for the first letter of $a$. There are $2r-1$ choices for each of the remaining letters of $a$, giving $\lp 2r-2\rp\lp 2r-1\rp^{i-1}$ choices for $a$. Solving $n=k+2i$ for $i$ when $n-k$ is even gives one of the above counts, and the other comes from the fact that if $n-k$ is odd, there are no words of length $n$ whose cyclic reduction are of length $k$.  In either case, there are less than $\lp 2r-1\rp^{\lp n-k\rp/2}$ words of length $n$ whose cyclic reduction is $w$ of length $k$. 

   Therefore, we have that, \begin{align*}
    \frac{\# A_n^\circ}{\#\Free_n^\circ}&\leq \sum_{i=0}^n \frac{\lp 2r-1\rp^{i/2}\#B^\circ_{n-i}}{\lp 2r-1\rp^n}\\
    &\leq \sum_{i=0}^n \frac{\# B_{n-i}^\circ}{\lp 2r-1\rp^{n-i/2}}\\
    &=\sum_{i=0}^n \frac{2\# B_{n-i}^\circ}{2\lp 2r-1\rp^{n-i}\lp 2r-1\rp^{i/2} }\\
    \intertext{ Using the fact that $B$ is exponentially rare in $\mathcal C$, let $b,c,$ and $M$ be as in the conclusion of Lemma \ref{CM24converse}. Furthermore, assume that $c<\ln\lp\sqrt{2r-1}\rp$ since we can make $c$ smaller and the estimate $\frac{\#B_n^\circ}{\#\mathcal C_n^\circ}\leq e^{b-cn}$ still holds. Take $k_0>M$, and let $C=\max\{\#B_i^\circ\}_{i=0}^{k_0}$. As we know that $\#\mathcal C_n\leq 2\lp 2r-1\rp^n$, we have that for $n>k_0$,} 
    \frac{\#A_n^\circ}{\#\Free_n^\circ}&\leq 2 \sum_{i=0}^{n-k_0}\frac{\#B^\circ_{n-i}}{\#\mathcal C_{n-i}^\circ\lp 2r-1\rp^{i/2}}+\sum_{i=n-k_0}^n\frac{C}{\lp 2r-1\rp^{n-i/2}}\\
    &\leq 2\sum_{i=0}^{n-k_0}\frac{\#B_{n-i}^\circ}{\#C_{n-i}^\circ\lp 2r-1\rp^{i/2}}+\sum_{i=n-k_0}^n \frac{C}{\lp 2r-1\rp^{n-i/2}}\\
    &\leq 2\sum_{i=0}^{n-k_0}e^{b-c\lp n-i\rp-i\ln\lp \sqrt{2r-1}\rp}+\frac{k_0C}{\lp 2r-1\rp^{n/2-k_0}}\\
    &=2\sum_{i=0}^{n-k_0} e^{b-cn-\lp\ln\lp \sqrt{2r-1}\rp-c\rp i}+\frac{k_0 C}{\lp 2r-1\rp^{n/2-k_0}}\\
    &\leq 2\lp n-k_0\rp e^{b-cn}+k_0 C e^{\ln\lp k_0\rp}e^{-n\ln\lp \sqrt{2r-1}\rp}
    \end{align*}
    We leave it to the reader to verify that for any $c'<c$ there is a $b'$ so that for sufficiently large $n$,
    $$\frac{\#A_n^\circ}{\#\Free_n^\circ}\leq e^{b'-c'n}.$$
\end{proof}

Lemma \ref{DenseEnough} follows quickly from \cite{KSSGenericProperties}. We will use Lemmas \ref{DenseEnough} and \ref{hasKedges} to show that for a random peripheral structure on $\Free$, the associated Whitehead graphs have the properties necessary to apply Lemma \ref{Kconnectedimpliesnosmall}. 

\begin{lemma}[cf \cite{KSSGenericProperties}*{Proposition 6.2}]\label{DenseEnough}
    For any vertices $p,q$ in $\mathcal{WH}_{\mathcal{B}}\lp \{p\}\rp\{\langle w\rangle\}$, let $E_{p,q}$ denote the set of edges between $p$ and $q$. Let $\epsilon>0$, and let $Q\lp n,\epsilon\rp$ be the set of cyclically reduced words $w$ of length $n$ such that for every pair of vertices $\{p,q\}$ in the Whitehead graph $\mathcal{WH}_{\mathcal{B}}\lp \{p\}\rp\{\langle w\rangle\}$ $$\frac{\#E_{p,q}}{n}\in\lp \frac{1}{r\lp 2r-1\rp}-\epsilon,\frac{1}{r\lp 2r-1\rp}+\epsilon\rp.$$ Then the union $\bigcup\limits_{n\geq 0} Q\lp n,\epsilon\rp$ is exponentially generic in the set of cyclically reduced words.
\end{lemma}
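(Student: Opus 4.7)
The plan is to reduce the lemma to a large-deviation estimate for the length-2 cyclic subword statistics of a random cyclically reduced word, which is essentially the content of \cite{KSSGenericProperties}*{Proposition 6.2}.

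First I would make the edge-subword dictionary explicit. As illustrated in the example following the definition of generalized Whitehead graphs, each cyclic length-2 subword $s_1 s_2$ of $w$ contributes one edge to $\mathcal{WH}_{\mathcal{B}}\lp \{p\}\rp\{\langle w\rangle\}$ joining the vertex $s_1^{-1}$ to the vertex $s_2$. A reduced length-2 word in $\Free$ is never equal to its own inverse, so the $2r(2r-1)$ reduced length-2 words partition into $r(2r-1)$ inverse pairs $\{s_1 s_2,\ s_2^{-1} s_1^{-1}\}$, and two subword occurrences contribute to the same edge of the Whitehead graph exactly when they form such a pair. Writing $N_v(w)$ for the number of cyclic occurrences of a reduced word $v$ in $w$, it follows that $\#E_{s_1^{-1}, s_2} = N_{s_1 s_2}(w) + N_{s_2^{-1} s_1^{-1}}(w)$. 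Since the $n$ length-2 subword slots of $w$ are distributed among $2r(2r-1)$ reduced length-2 words with expected frequency $1/(2r(2r-1))$ each, the target frequency for each edge is $1/(r(2r-1))$, matching the constant appearing in the statement.

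Next I would invoke the exponential equidistribution of subword frequencies in \cite{KSSGenericProperties}*{Proposition 6.2}. For each fixed reduced length-$k$ word $v$ and each $\delta>0$, that result says that the set of cyclically reduced words $w$ of length $n$ satisfying $|N_v(w)/n - 1/(2r(2r-1)^{k-1})| \geq \delta$ is exponentially rare in $\mathcal{C}$. Applied with $k=2$ and $\delta = \epsilon/2$, for each of the finitely many reduced length-2 words $v$, the condition $|N_v(w)/n - 1/(2r(2r-1))| < \epsilon/2$ defines an exponentially generic subset of $\mathcal{C}$. Since the intersection of finitely many exponentially generic sets is exponentially generic (as noted just after the definition of exponentially generic), the set of $w$ for which every length-2 subword frequency is within $\epsilon/2$ of $1/(2r(2r-1))$ is itself exponentially generic.

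Finally I would use the dictionary from the first paragraph to translate subword counts into edge counts. For any pair of vertices $p,q$, the edge count $\#E_{p,q}$ is a sum of two length-2 subword counts, each within $\epsilon n/2$ of $n/(2r(2r-1))$; hence $\#E_{p,q}$ lies within $\epsilon n$ of $n/(r(2r-1))$, and $w$ belongs to $Q(n,\epsilon)$. The only real obstacle is keeping track of the factor of $2$ between length-2 subword frequencies and edge frequencies that arises because inverse subwords determine the same unordered edge; once that bookkeeping is in place, the conclusion is a straightforward union bound over the finitely many reduced length-2 words on top of the KSS equidistribution result.
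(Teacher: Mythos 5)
Your proposal is correct and follows essentially the same route as the paper, which simply cites \cite{KSSGenericProperties}*{Proposition 6.2(1)} together with Lemma \ref{ShellOnly}; your version just makes explicit the dictionary between inverse pairs of cyclic length-$2$ subwords and multi-edges of $\mathcal{WH}_{\mathcal{B}}\lp \{p\}\rp\{\langle w\rangle\}$, which correctly accounts for the factor of $2$ in the limiting frequency $\frac{1}{r\lp 2r-1\rp}$. The only point worth flagging is that the KSS equidistribution estimate is a statement about words of length exactly $n$, so the passage from that shell estimate to exponential genericity in $\mathcal C$ should be routed explicitly through Lemma \ref{ShellOnly}, exactly as the paper does.
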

\begin{proof}
    This is immediate from \cite{KSSGenericProperties}*{Proposition 6.2 (1)} with Lemma \ref{ShellOnly}.
\end{proof}

We say a word $w\in\bigcup\limits_{n\geq 0} C\lp n,\epsilon\rp$ is $\epsilon$\emph{-pseudorandom}, after the terminology in \cite{CalegariWalker}*{Definition 3.2.2}.

\begin{lemma}\label{hasKedges}
    Let $k>0$. Let $B_k$ be the set of reduced words in $\Free$ such that the cyclic reduction of every $b\in B_k$ contains $k$ copies of each reduced word of length $2$. Then $B_k$ is exponentially generic in $\Free$.
\end{lemma}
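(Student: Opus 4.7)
The plan is to reduce the claim about arbitrary reduced words to a claim about cyclically reduced words via Lemma \ref{rarereduction}, and then invoke the pseudorandomness that underlies Lemma \ref{DenseEnough}. Let $\tilde B_k\subseteq \mathcal C$ denote the set of cyclically reduced words $w$ whose cyclic form contains each of the $2r(2r-1)$ reduced length-$2$ words as a subword at least $k$ times. Then $B_k=\{w\in\Free:w'\in\tilde B_k\}$, so Lemma \ref{rarereduction} reduces the problem to showing that $\tilde B_k$ is exponentially generic in $\mathcal C$.

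By Lemma \ref{ShellOnly}, exponential genericity of $\tilde B_k$ reduces to a sphere-by-sphere estimate, and a finite union bound over the $2r(2r-1)$ reduced length-$2$ words $\sigma$ and the integers $j<k$ reduces it further to the following claim: for each fixed $\sigma$, the proportion of $w\in\mathcal C_n^\circ$ in which $\sigma$ occurs fewer than $k$ times decays exponentially in $n$. For this I would appeal directly to the pseudorandomness result \cite{KSSGenericProperties}*{Proposition 6.2(1)} already invoked in the proof of Lemma \ref{DenseEnough}. That proposition, formulated in the sense of \cite{CalegariWalker}*{Definition 3.2.2}, shows that outside an exponentially rare set of length-$n$ cyclically reduced words, the count of every specific length-$2$ subword $\sigma$ lies in $(n(p-\epsilon),n(p+\epsilon))$, where $p=1/(2r(2r-1))$ is its stationary frequency under the natural Markov chain on $\mathcal B\cup\mathcal B^{-1}$ (uniform stationary distribution, transitions uniform over the $2r-1$ non-inverse successor letters). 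Fixing $\epsilon=p/2$ and taking $n\ge 4k/p$ then forces the count of $\sigma$ to exceed $k$ except on an exponentially rare set, which is exactly the needed sphere estimate.

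The one subtlety --- and the only real obstacle --- is that Lemma \ref{DenseEnough} as stated here only controls the Whitehead-graph edge counts $\#E_{p,q}$, and for an unordered pair $\{p,q\}$ of distinct Whitehead vertices this edge count lumps together the two subwords $p^{-1}q$ and $q^{-1}p$; thus Lemma \ref{DenseEnough} by itself only bounds the combined count of $\sigma$ and $\sigma^{-1}$, not the count of $\sigma$ individually. Controlling each individual length-$2$ subword is exactly the stronger form of pseudorandomness supplied by \cite{KSSGenericProperties}*{Proposition 6.2(1)} in the sense of \cite{CalegariWalker}*{Definition 3.2.2}, so invoking that stronger form (rather than only its Whitehead-graph corollary Lemma \ref{DenseEnough}) suffices. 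Combining this individual-subword concentration with the two preceding reductions --- first to $\mathcal C$ via Lemma \ref{rarereduction}, then to spheres via Lemma \ref{ShellOnly} --- completes the proof.
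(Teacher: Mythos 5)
Your proposal is correct and rests on the same two pillars as the paper's proof: the transfer principle of Lemma \ref{rarereduction} and the length-$2$ subword concentration from \cite{KSSGenericProperties}*{Proposition 6.2(1)} that underlies Lemma \ref{DenseEnough}. The bookkeeping differs in a way worth noting. The paper stays in $\Free$ and writes $B_k\supseteq S\cap P\cap L_k$, where $S$ is the set of words whose middle third survives cyclic reduction (exponentially generic by \cite{CashenManning}*{Lemma 2.7}), $P$ is the set of words whose cyclic reduction is pseudorandom, and $L_k$ is an explicit length cutoff; the role of $S\cap L_k$ is to force the cyclic reduction to be long enough that a density bound converts into an absolute count of $k$. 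You instead prove the sphere-by-sphere estimate directly in $\mathcal C$ (where ``sufficiently large $n$'' plays the role of $L_k$) and invoke Lemma \ref{rarereduction} once at the end; this makes $S$ and $L_k$ unnecessary, since the proof of Lemma \ref{rarereduction} already discounts the words with short cyclic reduction, so your route is a mild streamlining. The subtlety you flag is also genuine: an unordered edge $\{u,v\}$ of the Whitehead graph collects the occurrences of $u^{-1}v$ and of $v^{-1}u=\lp u^{-1}v\rp^{-1}$ together, so the statement of Lemma \ref{DenseEnough} only bounds $\mathrm{count}\lp\sigma\rp+\mathrm{count}\lp\sigma^{-1}\rp$ from below, while the paper's proof passes silently from $\epsilon$-pseudorandomness to a lower bound on each individual length-$2$ subword. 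Your two proposed repairs are both valid: quote the individual-subword form of the concentration result, or note that the combined count $\#E_{p,q}\geq k$ is all that the downstream application in Lemma \ref{Kconnectedimpliesnosmall} actually consumes.
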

\begin{proof}
    Let $S$ be the set of elements of $\Free$ whose middle third survives cyclic reduction. Let $P$ be the set of words whose cyclic reduction is $\frac{1}{2r\lp 2r-1\rp}-$ pseudorandom. Let $L_k$ be the set of words of length at least  $15r\lp2r-1\rp k$. Consider $x\in S\cap P\cap L$. Then the cyclic reduction $x'$ has length at least $5r\lp 2r-1\rp k$, and is $\frac{1}{2r\lp 2r-1\rp}$-pseudorandom, so it contains at least \begin{align*}
        \lp \frac{1}{r\lp 2r-1\rp}-\frac{1}{2r\lp 2r-1\rp}\rp\lp 5r\lp 2r-1\rp k\rp &=\frac{1}{2}\frac{ 5r\lp 2r-1\rp k}{\lp 2r\rp\lp 2r-1\rp}\\[1em]
        &=\frac{5r\lp 2r-1\rp k}{4r\lp 2r-1\rp}\\[1em]
        &>k
    \end{align*}
    copies of every reduced word of length $2$. Therefore, $S\cap P\cap L_k\subseteq B_k$. Therefore, it is sufficient to show that $S\cap P\cap L_k$ is exponentially generic.

    The fact that $S$ is exponentially generic is \cite{CashenManning}*{Lemma 2.7}. The fact that $P$ is exponentially generic follows from Lemma \ref{rarereduction} and Lemma \ref{DenseEnough}. The fact $L_k$ is exponentially generic follows from exponential growth of the free group, but the estimate will be shown here for completeness. Let $N_k=30r\lp 2r-1\rp k$. Take $l>N_k$. Take $\alpha=1+\sum_{j=1}^{N_k-1} 2r\lp 2r-1\rp^{j}$, the number of words of length less than $N_k$. The number of words of length between $N_k$ and $l$ inclusive is $\sum_{j=N_k}^{l}2r\lp 2r-1\rp^j$. Therefore, \begin{align*}
        1-\frac{\#\{w\in L_k|\|w\|_{\mathcal B}\leq l\}}{\#\{w\in \Free|\|w\|_{\mathcal B}\leq l\}}&=1-\frac{\sum_{j=N_k}^{l} 2r\lp 2r-1\rp^j}{\alpha+\sum_{j=N_k}^{l} 2r\lp 2r-1\rp^j}\\[1em]
        &=\frac{\alpha+\sum_{j=N_k}^{l} 2r\lp 2r-1\rp^j-\sum_{j=N_k}^{l} 2r\lp 2r-1\rp^j}{\alpha+\sum_{j=N_k}^{l} 2r\lp 2r-1\rp^j}\\[1em]
        &=\frac{\alpha}{\alpha+\sum_{j=N_k}^{l} 2r\lp 2r-1\rp^j}\\[1em]
        &=\frac{1}{1+\frac{\sum_{j=N_k}^{l} 2r\lp 2r-1\rp^j}{\alpha}}\\[1em]
        &<\frac{\alpha}{\sum_{j=N_k}^{l} 2r\lp 2r-1\rp^j}.
    \end{align*}

    Since $\sum_{j=N_k}^{l} 2r\lp 2r-1\rp^j$ grows exponentially in $l$, and $\alpha$ remains constant, we have that $L_k$ is exponentially generic in $\Free$.

    As the intersection of exponentially generic sets is exponentially generic, and a set containing an exponentially generic set is exponentially generic, $B_k$ is exponentially generic in $\Free$.
\end{proof}

\begin{lemma}\label{fourfull}
    Let $H_k$ be the set of elements of $\Free$ such that for every $h\in H_k$, the cyclic reduction $h'$ is not a proper power, is $4$-full, and contains at least $k$ copies of every reduced length $2$ word as a subword. Then $H_k$ is exponentially generic in $\Free$.
\end{lemma}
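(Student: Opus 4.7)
The plan is to write $H_k$ as a finite intersection of three exponentially generic subsets of $\Free$ and then appeal to the fact that finite intersections of exponentially generic sets are exponentially generic. Concretely, set $P = \{ w \in \Free : w' \text{ is not a proper power} \}$ and $F = \{ w \in \Free : w' \text{ is } 4\text{-full} \}$, so that $H_k = P \cap F \cap B_k$, where $B_k$ is the set from Lemma \ref{hasKedges}. Since $B_k$ is exponentially generic by Lemma \ref{hasKedges}, the task reduces to verifying exponential genericity of $P$ and of $F$.

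For $P$, I would combine Lemma \ref{rarereduction} (reducing to a statement about $\mathcal{C}$) with Lemma \ref{ShellOnly} (reducing to a shell-by-shell count). A cyclically reduced proper power of length $n$ can be written as $u^j$ with $j \geq 2$ and $u$ cyclically reduced of length $n/j \leq n/2$, so the number of such words on the shell of radius $n$ is at most $\sum_{d \mid n,\ d \leq n/2} \# \mathcal{C}_d^\circ$. Using the bound $\# \mathcal{C}_d^\circ \leq 2(2r-1)^d$ recalled in the proof of Lemma \ref{ShellOnly}, this sum is at most $2 n (2r-1)^{n/2}$, while $\# \mathcal{C}_n^\circ \geq (2r-1)^n$. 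The resulting ratio decays exponentially, so the cyclically reduced proper powers are exponentially rare in $\mathcal{C}$, and Lemma \ref{rarereduction} then yields that $P$ is exponentially generic in $\Free$.

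For $F$, I would run the argument of Lemma \ref{hasKedges} at scale $4$ rather than scale $2$, working directly inside $\mathcal{C}$ and invoking Lemma \ref{rarereduction} at the end to transfer the conclusion to $\Free$. The key input is the length-$4$ analog of Lemma \ref{DenseEnough}: for each $\epsilon > 0$, the cyclically reduced words in which every reduced word of length $4$ occurs as a cyclic subword with relative frequency in $\lp \frac{1}{2r(2r-1)^3} - \epsilon,\ \frac{1}{2r(2r-1)^3} + \epsilon\rp$ form an exponentially generic subset of $\mathcal{C}$. This is the length-$4$ case of \cite{KSSGenericProperties}*{Proposition 6.2} combined with Lemma \ref{ShellOnly}, and is proved by the same large-deviation machinery that underlies Lemma \ref{DenseEnough}. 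Fixing $\epsilon = \frac{1}{4r(2r-1)^3}$ and intersecting with the exponentially generic set of cyclically reduced words of length at least $4 r (2r-1)^3$ guarantees that every reduced word of length $4$ appears at least once, so $4$-full cyclically reduced words are exponentially generic in $\mathcal{C}$; Lemma \ref{rarereduction} transfers this to $F$. The main obstacle is simply installing the length-$4$ pseudorandomness statement, but it is a direct generalization of Lemma \ref{DenseEnough} and introduces no new ideas beyond those already used in the proof of Lemma \ref{hasKedges}.
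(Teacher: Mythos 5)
Your argument is correct, but for the $4$-fullness component it takes a genuinely different (and heavier) route than the paper. You are right that $H_k = P\cap F\cap B_k$ and that everything reduces to the exponential genericity of $P$ and $F$. For $P$ the paper simply cites \cite{KSSGenericProperties}*{Theorem B(1)}; your divisor-counting bound on cyclically reduced proper powers, fed through Lemmas \ref{ShellOnly} and \ref{CM24converse}/\ref{rarereduction}, is a correct self-contained substitute. For $F$, however, the paper avoids any new pseudorandomness input entirely: it fixes a single $4$-full word $w$, takes $W$ to be the set of words containing $w$ in their middle third (exponentially generic by \cite{CashenManning}*{Proposition 2.8}), notes that any word containing a $4$-full subword is itself $4$-full, and concludes $B_k\cap W\cap N\subseteq H_k$. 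Your route instead requires installing a length-$4$ analogue of Lemma \ref{DenseEnough}, i.e.\ a subword-frequency large-deviation statement for reduced words of length $4$; this is true, your constants (expected frequency $\frac{1}{2r(2r-1)^3}$, threshold $\epsilon=\frac{1}{4r(2r-1)^3}$) are consistent, and the transfer to $\Free$ via Lemma \ref{rarereduction} is sound, but the cited Proposition 6.2 of \cite{KSSGenericProperties} as used in this paper concerns Whitehead graph edges (length-$2$ subwords), so the length-$4$ version is genuinely new infrastructure you would have to prove. What your approach buys is uniformity and cleanliness: the same statement yields $m$-fullness for every $m$ and dispenses with the ``middle third survives cyclic reduction'' device, since you work directly in $\mathcal C$. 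What the paper's approach buys is brevity, needing only the already-available fact that containing one fixed subword in the middle third is exponentially generic.
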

\begin{proof}
      Let $B_k$ be as in Lemma \ref{hasKedges}. Let $w$ be a 4-full word, and let $W$ be the set of words in $\Free$ which contain $w$ in their middle third. Let $N$ be the set of words whose cyclic reduction are not proper powers. We note that if $x\in \Free$ contains a $4$-full word, then $x$ is $4$-full. Then, $B_k\cap W\cap N \subseteq H_k$. We know that $B_k$ is exponentially generic by Lemma \ref{hasKedges}, $W$ is exponentially generic by \cite{CashenManning}*{Proposition 2.8}, and $N$ is exponentially generic by \cite{KSSGenericProperties}*{Theorem B(1)}. Therefore $H_k$ is exponentially generic in $\Free$ as claimed.
\end{proof}

We now prove Theorem \ref{NoSizeK}. 
\begin{proof}
    Take $\Per'$ to be a random collection of words of length at most $n$. Let $\Per'$ be the induced peripheral structure. Let $H_k$ be as in Lemma \ref{fourfull}. As $H_k$ is exponentially generic in $\Free$, $\Per$ contains a word $w$ whose which is 4-full, not a proper power, and contains at least $k$ copies of every reduced length 2 subword at least $k$ times with probability $1-e^{a-bn}$ for some $a\in \mathbb R$ and $b>0$. By Lemma \ref{Kconnectedimpliesnosmall}, we then have that any cut set of $\partial\FreeRelPer$ contains at least $k$ elements.
\end{proof}

\bibliography{bibliography}

@article{CashenMacuraLinePatterns,
author = {Cashen, C.H. and Macura, N.},
address = {Coventry, UK :},
issn = {1465-3060},
journal = {Geometry \& Topology.},
lccn = {sn 98036497},
number = {3},
pages = {1419-1475},
publisher = {Geometry and Topology Publications, Mathematics Institute, University of Warwick,},
title = {Line patterns in free groups},
volume = {15},
year = {2011},
}

@article{CashenManning,
author = {Cashen, C.H. and Manning, J.F.},
address = {London, England},
issn = {1461-1570},
journal = {LMS Journal of Computation and Mathematics},
lccn = {00252444},
number = {1},
pages = {444-455},
publisher = {London Mathematical Society},
title = {Virtual geometricity is rare},
volume = {18},
year = {2015},
}

@article{BowditchRH,
author = {Bowditch, B.H.},
address = {Singapore},
issn = {0218-1967},
journal = {International Journal of Algebra and Computation},
number = {3},
pages = {1250016 1-66},
publisher = {World Scientific,},
title = {Relatively hyperbolic groups},
volume = {22},
year = {2012},
}

@article{Tran,
author = {Tran, H.C.},
address = {Singapore},
issn = {0218-1967},
journal = {International Journal of Algebra and Computation},
number = {7},
pages = {1551-1572},
publisher = {World Scientific},
title = {RELATIONS BETWEEN VARIOUS BOUNDARIES OF RELATIVELY HYPERBOLIC GROUPS},
volume = {23},
year = {2013},
}

@article{HaulmarkLocalCutPoints,
    author = {Haulmark, M.},
    address = {Coventry, UK},
    issn = {1472-2747},
    journal = {Algebraic \& Geometric Topology},
    lccn = {2001233119},
    number = {6},
    pages = {2795-2836},
    publisher = {Geometry \& Topology Publications},
    title = {Local cut points and splittings of relatively hyperbolic groups},
    volume = {19},
    year = {2019},
}

@article{Otal,
author = {Otal, J-P},
address = {London},
issn = {0024-6107},
journal = {Journal of the London Mathematical Society},
lccn = {31030999},
number = {1},
pages = {123-139},
publisher = {London Mathematical Society},
title = {Certaines Relations D'\'Equivalence Sur L'Ensemble Des Bouts D'Un Groupe Libre},
volume = {46},
year = {1992},
}

@article{CashenSplittingLinePatterns,
author = {Cashen, C.H.},
issn = {1472-2747},
journal = {Algebraic \& Geometric Topology},
number = {2},
pages = {621-673},
publisher = {Multiple Vendors},
title = {Splitting line patterns in free groups},
volume = {16},
year = {2016},
}

@article{CalegariWalker,
author = {Calegari, D. and Walker, A.},
address = {Providence, R.I.},
issn = {0894-0347},
journal = {Journal of the American Mathematical Society.},
lccn = {sn 96030023},
number = {2},
pages = {383-419},
publisher = {The Society},
title = {Random groups contain surface subgroups},
volume = {28},
year = {2015},
}

@article {KSSGenericProperties,
    AUTHOR = {Kapovich, I. and Schupp, P. and Shpilrain, V.},
     TITLE = {Generic properties of {W}hitehead's algorithm and isomorphism
              rigidity of random one-relator groups},
   JOURNAL = {Pacific J. Math.},
  FJOURNAL = {Pacific Journal of Mathematics},
    VOLUME = {223},
      YEAR = {2006},
    NUMBER = {1},
     PAGES = {113--140},
      ISSN = {0030-8730,1945-5844},
   MRCLASS = {20F10 (20F05 20F28 20P05 57M05)},
  MRNUMBER = {2221020},
MRREVIEWER = {Jan-Christoph\ Schlage-Puchta},
       DOI = {10.2140/pjm.2006.223.113},
       URL = {https://doi.org/10.2140/pjm.2006.223.113},
}

@book{GromovAsymptInv,
booktitle = {Geometric group theory, Vol. 2},
author = {Gromov, M.},
title = {Asymptotic invariants of infinite groups},
year = {1993},
}

@article{ManningVirtuallyGeometric,
author = {Manning, J.F.},
address = {Cambridge, MA :},
issn = {1073-2780},
journal = {Mathematical research letters : MRL.},
lccn = {94660062},
pages = {917-925},
publisher = {International Press,},
title = {Virtually geometric words and {W}hitehead’s algorithm},
volume = {17},
year = {2010},
number = {5},
}

@book {MannHGG,
    AUTHOR = {Mann, A.},
     TITLE = {How groups grow},
    SERIES = {London Mathematical Society Lecture Note Series},
    VOLUME = {395},
 PUBLISHER = {Cambridge University Press, Cambridge},
      YEAR = {2012},
     PAGES = {x+199},
      ISBN = {978-1-107-65750-2},
   MRCLASS = {20F69 (20F05)},
  MRNUMBER = {2894945},
MRREVIEWER = {Wolfgang\ Woess},
}

@article {ArzhantsevaOlshanskiFewRelators,
    AUTHOR = {Arzhantseva, G. and Ol\cprime shanski\u{\i}, A.},
     TITLE = {Generality of the class of groups in which subgroups with a lesser number of generators are free},
   JOURNAL = {Mat. Zametki},
  FJOURNAL = {Matematicheskie Zametki},
    VOLUME = {59},
      YEAR = {1996},
    NUMBER = {4},
     PAGES = {489--496, 638},
      ISSN = {0025-567X,2305-2880},
   MRCLASS = {20F05 (05C25 20F06)},
  MRNUMBER = {1445193},
MRREVIEWER = {David\ I.\ Moldavanski\u{\i}},
       DOI = {10.1007/BF02308683},
       URL = {https://doi.org/10.1007/BF02308683},
}

\end{document}